\theoremstyle{plain}
\newtheorem{thm}{Theorem}[section]
\newtheorem*{thm*}{Theorem}
\newtheorem*{cor*}{Corollary}
\newtheorem{prop}[thm]{Proposition}
\newtheorem{lem}[thm]{Lemma}
\newtheorem{cor}[thm]{Corollary}
\newtheorem*{claim*}{Claim}
\theoremstyle{definition}
\newtheorem{defn}[thm]{Definition}
\newtheorem{ex}[thm]{Example}
\newtheorem*{case*}{Case}
\theoremstyle{remark}
\numberwithin{equation}{thm}
\def\Ext{\operatorname{Ext}}
\def\Hom{\operatorname{Hom}}
\def\mod{\mathrm{mod}}
\def\m{\mathfrak m}
\newcommand{\rma}{\mathrm{a}}
\newcommand{\rmc}{\mathrm{c}}
\newcommand{\rme}{\mathrm{e}}
\newcommand{\rmo}{\mathrm{o}}
\newcommand{\rmr}{\mathrm{r}}
\newcommand{\rmC}{\mathrm{C}}
\newcommand{\rmQ}{\mathrm{Q}}
\newcommand{\calX}{\mathcal{X}}
\newcommand{\calY}{\mathcal{Y}}
\newcommand{\fkc}{\mathfrak{c}}
\newcommand{\mapright}[1]{%
\smash{\mathop{%
\hbox to 1cm{\rightarrowfill}}\limits^{#1}}}
\newcommand{\mapleft}[1]{%
\smash{\mathop{%
\hbox to 1cm{\leftarrowfill}}\limits_{#1}}}
\def\AGL{\operatorname{AGL}}
\def\gr{\mbox{\rm gr}}
\title{Ulrich ideals in the ring $k[[t^5,t^{11}]]$}
\author[Naoki Endo]{Naoki Endo}
\address{Department of Mathematics, Faculty of Science, Tokyo University of Science, 1-3 Kagurazaka, Shinjuku, Tokyo 162-8601, Japan}
\email{nendo@rs.tus.ac.jp}
\urladdr{https://www.rs.tus.ac.jp/nendo/}
\author[Shiro Goto]{Shiro Goto}
\address{Department of Mathematics, School of Science and Technology, Meiji University, 1-1-1 Higashi-mita, Tama-ku, Kawasaki 214-8571, Japan}
\email{shirogoto@gmail.com}
\author[Shin-ichiro Iai]{Shin-ichiro Iai}
\address{Mathematics laboratory, Sapporo College, Hokkaido University of Education, 1-3 Ainosato 5-3, Kita-ku, Sapporo 002-8502, Japan}
\email{iai@sap.hokkyodai.ac.jp}
\author[Naoyuki Matsuoka]{Naoyuki Matsuoka}
\address{Department of Mathematics, School of Science and Technology, Meiji University, 1-1-1 Higashi-mita, Tama-ku, Kawasaki 214-8571, Japan}
\email{naomatsu@meiji.ac.jp}
\thanks{2020 {\em Mathematics Subject Classification.} 13A15, 13H15, 13H10.}
\thanks{{\em Key words and phrases.} Ulrich ideal, numerical semigroup}
\thanks{The first author was partially supported by JSPS Grant-in-Aid for Young Scientists 20K14299. 
The second author was partially supported by JSPS Grant-in-Aid for Scientific Research (C) 21K03211. }
\begin{document}

\maketitle

\setlength{\baselineskip} {15.3pt}

\begin{abstract}
The Ulrich ideals in the semigroup rings $k[[t^5, t^{11}]]$ and $k[[t^5,t^6,t^9]]$ are determined, by describing the normal forms of systems of generators, where $k[[t]]$ denotes the formal power series ring over a field $k$. 
\end{abstract}

%%%%%%% %%%%%%%%%%%%%%%%%%%%%%%%%%%%%%%%%%larger%%%%%%%%%%%%%%%%%%%%%%%%%%%, %%%%

%{\footnotesize \tableofcontents}

\section{Introduction}\label{intro}

This research is one of the attempts of determining the Ulrich ideals in the semigroup rings of numerical semigroups, and the present one is a succession of \cite{e=4}, where the authors started a systematic study of the ubiquity of Ulrich ideals inside semigroup rings. They succeeded in providing the normal forms of systems of generators of the Ulrich ideals, particularly in the case where the multiplicity of the semigroups is at most three. They pinpointed also the set $\calX_{k[[t^4,t^{13}]]}$ of Ulrich ideals in the ring $k[[H]]=k[[t^4,t^{13}]]$, the semigroup ring of the numerical semigroup $H=\left<4,13 \right>$ generated by $4, 13$, where $k[[t]]$ denotes the formal power series ring over a field $k$. The accurate arguments in \cite{e=4} have brought a new point of view, not only to the study of Ulrich ideals but also to further problems about numerical semigroups. The present purpose is, based on the technique developed by \cite{e=4}, to explore mainly the case of $k[[t^5,t^{11}]]$, which has been predicted in \cite{e=4} but left for another occasion. 

%%%%%%%%%%%%%%%%%%%%%%%%%%%%%%%%%%%%%%%%%%%%%%%%%%%

\section{Brief review on Ulrich ideals and preliminaries}
The notion of Ulrich ideal is one of the modifications of {\it stable} maximal ideal introduced in 1971 by J. Lipman \cite{L}. The present modification was formulated by \cite{GOTWY} in 2014. 

Let $(A, \m) $ be a Cohen-Macaulay local ring with $\dim A=d \ge 0$, and $I$ an $\m$-primary ideal of $A$. We throughout assume that $I$ contains a parameter ideal $Q$ of $A$ as a reduction.

\begin{defn} (\cite[Definition 1.1]{GOTWY})\label{def}
We say that $I$ is  an {\it Ulrich} ideal of $A$, if the following conditions are satisfied.
\begin{enumerate}[$(1)$]
\item $I \ne Q$,  $I^2=QI$, and
\item $I/I^2$ is a free $A/I$-module.
\end{enumerate}
\end{defn}

\noindent
Notice that Condition $(1)$ of Definition \ref{def} is satisfied if and only if the associated graded ring $\gr_I(A) = \bigoplus_{n\ge 0} I^n/I^{n+1}$ is a Cohen-Macaulay ring with $\rma(\gr_I(A))=1-d$, where $\rma(\gr_I(A))$ denotes the $\rma$-invariant of $\gr_I(A)$ (\cite[Definition 3.1.4]{GW}). Therefore, Condition $(1)$ is independent of the choice of reductions $Q$ of $I$. When $I=\m$, Condition $(2)$ is automatically satisfied, while Condition $(1)$ is equivalent to saying that $A$ is not regular but of minimal multiplicity.

Let $I$ be an $\m$-primary ideal of $A$ and assume that $I^2=QI$. Then, since $Q/QI$ is a free $A/I$-module of rank $d$, the exact sequence
$$
0 \to Q/QI \to I/I^2 \to I/Q \to 0
$$
of $A/I$-modules shows that $I/I^2$ is a free $A/I$-module if and only if so is $I/Q$. Therefore, provided $I$ is minimally generated by $d+1$ elements, the latter condition is equivalent to saying that $I/Q \cong A/I$ as an $A/I$-module. If $I$ is an Ulrich ideal, then by \cite{GOTWY, GTT2} we get the equality 
$$
(\mu_A(I)-d)\cdot\rmr(A/I)= \rmr(A),
$$
where $\mu_A(I)$ (resp. $\rmr(*)$) denotes the number of generators of $I$ (resp. the Cohen-Macaulay type). Therefore,
$d+1 \le \mu_A(I) \le d + \rmr(A)$,
so that when $A$ is a Gorenstein ring, every Ulrich ideal $I$ is generated by $d+1$ elements (if it exists).  As is shown in \cite{GOTWY, GTT2}, all the Ulrich ideals with $\mu_A(I)=d+1$ possess finite G-dimension, and their minimal free resolutions have a restricted form, so that they are eventually periodic of period one.

For instance, assume that $\dim A=1$ and let $I$ be an Ulrich ideal of $A$. Therefore, $I$ is an $\m$-primary ideal of $A$, and $I^2 = aI$ for some $a \in I$, such that $I \ne (a)$ but $I/(a)$ is a free $A/I$-module.   Assume that $I$ is minimally generated by {\it two} elements, say $I=(a, b)$ with $b \in I$, and write $b^2 = ac$ for some $c \in I$. We then have, since $I/(a) \cong A/I$, that $(a):_Ab = I$, and the minimal free resolution of $I$ has the following form
$$
\ \ \cdots \longrightarrow A^{\oplus 2} \overset{
\begin{pmatrix}
-b & -c\\
a & b
\end{pmatrix}}{\longrightarrow}
A^{\oplus 2} \overset{
\begin{pmatrix}
-b & -c\\
a & b
\end{pmatrix}}{\longrightarrow}A^{\oplus 2} \overset{\begin{pmatrix}
a & b
\end{pmatrix}}{\longrightarrow} I \longrightarrow 0
$$
(\cite[Example 7.3]{GOTWY}). In particular,  $I$ is a {\it totally reflexive} $A$-module, that is $I$ is a  reflexive $A$-module, $\Ext_A^p(I, A) =(0)$, and $\Ext_A^p(\Hom_A(I, A), A) = (0)$ for all $p >0$ (\cite[Proposition 4.6]{GIT}). We clearly have that $I =J$, once $\operatorname{Syz}_A^i(I) \cong \operatorname{Syz}_A^i(J)$ for some $i \ge 0$, provided $I,J$ are Ulrich ideals of $A$.

It seems that behind the behavior of Ulrich ideals and their existence also, there is hidden some ample information about the structure of the base rings. For example, if $A$ has finite Cohen-Macaulay representation type, then $A$ contains only finitely many Ulrich ideals  (\cite{GOTWY}). In a one-dimensional non-Gorenstein almost Gorenstein local ring, the only possible Ulrich ideal is the maximal ideal (\cite[Theorem 2.14]{GTT2}). In \cite{GIT}  the authors explored the ubiquity of Ulrich ideals in a {\it $2$-AGL} rings (one of the generalizations of Gorenstein local rings of dimension one), and showed that the existence of two-generated Ulrich ideals provides a rather strong restriction on the structure of the base local rings (\cite[Theorem 4.7]{GIT}). The motivation for our research comes from these observations.

Let us summarize a few results which we later need in this paper. Throughout, let $k$ be a field and let $V = k[[t]]$ denote the formal power series ring over $k$. Let $A$ be a $k$-subalgebra of $V$. Then, following \cite{EGMY}, we say that $A$ is a {\it core}  of $V$, if $t^c V \subseteq A$ for some $c \gg 0$. The semigroup rings $k[[H]]=k[[t^{a_i} \mid 1 \le i \le \ell]]$ of numerical semigroups $H=\left<a_1, a_2, \ldots, a_\ell \right>$ are typical examples of cores of $V$. If $I$ is an Ulrich ideal in the semigroup ring $A$ of a numerical semigroup, the blowing-up ring $A^I = \bigcup_{n \ge 0}[I^n:I^n]$ of $A$ with respect to $I$ is again a core of $V$, which is, however, not necessarily a semigroup ring (see, e.g., Proposition  \ref{prop2}). We would like to refer the readers to \cite{RG} for general results on numerical semigroups.

Let $I$ be a fixed two-generated  Ulrich ideal of a core $A$ of $V$. Let $f, g \in I$ such that $I = (f, g)$ and $I^2=fI$. We consider the $A$-subalgebra $$A^I=\bigcup_{n \ge 0}[I^n : I^n]$$
of $V$, where the colon $$I^n:I^n =\{x \in \operatorname{Q}(A) \mid xI^n \subseteq I^n \}$$ is considered inside the quotient field of $A$. We then have $A^I = I:I$ since $I^{n+1}=f^{n}I$ for all $n \ge 0$, so that $A^I = f^{-1}I =A + A{\cdot}\frac{g}{f}$.  We set 
$$a = \rmo(f), \ \  b = \rmo(g), \ \ \text{and}\ \ c = \rmc(H)$$
where $\rmc(H)$ denotes the conductor of $H$. Notice that $a$ is an invariant of $I$, since $IV=fV=t^aV$. We set $v(A) = \{\rmo(f) \mid f \in A\}$, where $\rmo(*)$ denotes the valuation (or the order function) of $V$. With this setting and notation, we have the following, which plays a key role throughout this paper.

\begin{lem}[{\cite[Lemma 2.3, Theorem 2.7]{e=4}}]\label{key}
Let $A$ be a core of $V=k[[t]]$
 and let $H=v(A)$. Let $I$ be an Ulrich ideal in $A$ and assume that $I$ is minimally generated by two elements.  Then one can choose elements $f, g \in I$ so that the following conditions are satisfied, where $a = \rmo(f)$, $b=\rmo(g)$, $c = \rmc(H)$, and $\fkc = A:V$.
\begin{enumerate}[$(1)$]
\item $I=(f, g)$ and $I^2= fI$. 
\item $a, b \in H$ and $0 < a < b < a + c$.
\item $b-a \not\in H$, $2b-a \in H$, and $a = 2\cdot \ell_A(A/I)$.
\item $t^{c-(b-a)}V \cap A \subseteq I$ and $\fkc \subseteq I$.
\item If $a \ge c$, then $\rme(A) = 2$ and $I=\fkc$.
\end{enumerate}
\end{lem}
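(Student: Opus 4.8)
The plan is to construct $f$ and $g$ by hand and then read off the five assertions, leaning on two elementary facts about a core $A$ of $V$ with $H=v(A)$. First, $\fkc=A:V$ equals $t^{c}V$ where $c=\rmc(H)$: for each integer $n\ge c$ pick $a_n\in A$ with $\rmo(a_n)=n$, and argue by downward induction --- with base case $t^NV\subseteq A$ for $N\gg0$ and using that $A$ is open, hence closed, in $V$ --- that $t^n\in A$ for all $n\ge c$. Second, $\ell_A(A/xA)=\rmo(x)$ for every nonzero $x\in A$; this drops out of the snake lemma applied to multiplication by $x$ on $0\to A\to V\to V/A\to0$, together with $\ell_A(V/A)<\infty$.

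Now fix the two-generated Ulrich ideal $I$. Since $\dim A=1$ we take a principal reduction $(f)$ with $I^2=fI$; because $IV=fV$, the integer $a:=\rmo(f)$ is the least order occurring in $I$, and as $\mu_A(I)=2=d+1$ the discussion preceding the lemma provides $g$ with $I=(f,g)$, $I/(f)\cong A/I$, $(f):_Ag=I$, and $A^I=f^{-1}I=A+A\cdot\tfrac{g}{f}$. I would then normalise $g$: as long as $\rmo(g)-a\in H$, choose $w\in A$ with $\rmo(w)=\rmo(g)-a$ and replace $g$ by $g-\lambda wf$ for the scalar $\lambda\in k^\times$ that cancels the leading term. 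Each such step preserves $I=(f,g)$ and the class $\bar g$ in $I/(f)$, and strictly raises $\rmo(g)$; the procedure terminates, since otherwise the successive $g$'s, differing by elements of the closed ideal $fA$, would converge in the complete ring $A$ to an element of $(f)$, contradicting $\bar g\ne0$. At termination $b:=\rmo(g)>a$ satisfies $b-a\notin H$, hence $b-a<c$ and $a<b<a+c$; and $a,b\in v(A)=H$ with $a>0$ since $f\in\m$. This establishes $(1)$, $(2)$, and the first clause of $(3)$.

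The remaining assertions follow quickly. Squaring gives $g^2\in I^2=fI$, so $g^2=fh$ with $h\in I\subseteq A$, whence $\rmo(h)=2b-a\in H$; and since $(f)\subseteq I$ with $I/(f)\cong A/I$, the length formula gives $a=\ell_A(A/fA)=\ell_A(A/I)+\ell_A(I/(f))=2\,\ell_A(A/I)$, completing $(3)$. For $(4)$: if $x\in A$ has $\rmo(x)\ge c-(b-a)$, then $x\cdot\tfrac{g}{f}$ has order $\ge c$, so lies in $t^cV\subseteq A$; hence $xg\in fA$, and with $xf\in fA$ this gives $xI\subseteq(f)$, so $x\in(f):_AI=\bigl((f):_Af\bigr)\cap\bigl((f):_Ag\bigr)=A\cap I=I$; thus $t^{c-(b-a)}V\cap A\subseteq I$, and $\fkc=t^cV$ lies inside this set. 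For $(5)$, assume $a\ge c$: then $IV=t^aV\subseteq t^cV\subseteq A$, so $I\subseteq IV\cap A=t^aV\subseteq t^cV=\fkc\subseteq I$, forcing $I=\fkc=t^cV$ (so in fact $a=c$); and since $\rme(A)=\ell_A(A/xA)=\rmo(x)$ equals the least positive $m\in H$ for a minimal reduction $(x)$ of $\m$, while $\m\cdot t^cV=t^{c+m}V$ yields $\mu_A(t^cV)=\dim_k\bigl(t^cV/t^{c+m}V\bigr)=m$, two-generation of $I=t^cV$ forces $m=2$.

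The length and conductor bookkeeping is routine; the step that really needs care is the normalisation of $g$ in the second paragraph, where one must check simultaneously that the replacement process terminates (this is where completeness of $A$ and closedness of $fA$ enter) and that every replacement leaves intact the identities $I=(f,g)$, $I/(f)\cong A/I$, $(f):_Ag=I$ used afterwards. A secondary subtlety is the final line of $(5)$: the identification $\mu_A(\fkc)=\rme(A)$ is what turns ``$I$ equals the conductor and is two-generated'' into minimal multiplicity.
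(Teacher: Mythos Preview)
The paper does not itself prove this lemma; it is imported wholesale from \cite[Lemma~2.3, Theorem~2.7]{e=4}, so there is no in-paper argument to compare against. Your argument is correct and self-contained, and the overall architecture---choose a principal reduction $f$, normalise $g$ modulo $fA$ until $\rmo(g)-a\notin H$, then read off the numerics from $I/(f)\cong A/I$ and $g^2\in fI$---is exactly the natural one.

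Two small remarks on the points you yourself flagged. For the termination of the normalisation of $g$, your convergence argument via completeness of $A$ and closedness of $fA$ is fine, but there is a more direct bound that avoids topology: if some replacement produced $\rmo(g-fz)\ge a+c$, then writing $f=t^{a}u$ with $u\in V^{\times}$ gives $g-fz\in t^{a+c}V=f\cdot t^{c}V\subseteq fA$, contradicting $g\notin(f)$. Hence the orders $\rmo(g_n)$ are bounded above by $a+c-1$ and the process halts in finitely many steps. This also makes the inequality $b<a+c$ in~$(2)$ immediate.

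In $(5)$ you invoke a minimal reduction $(x)$ of $\m$ without justification; over a finite residue field this is not automatic for arbitrary local rings. Here it is: pick any $x\in\m$ with $\rmo(x)=m$; then the integral closure of the principal ideal satisfies $\overline{(x)}=xV\cap A=t^{m}V\cap A=\{a\in A:\rmo(a)\ge m\}=\m$, so $\m$ is integral over $(x)$ and $(x)$ is a reduction of $\m$. With this in hand your computation $\rme(A)=\ell_A(A/xA)=\rmo(x)=m=\mu_A(\fkc)=2$ goes through.
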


%%%%%%%%%%%%%%%%%%%%%%%%%%%%%%%%%%%%%%%%%%%%%%%%%%%%%%%%%%%%%%%%%%%%%%%%%%%%%%%%%%%%%%%%%%%%%%%%%%%%%%%%%%%%%%%%%%%%%%%%%%%%%%%%%%%%%%%%%%%%%%%%%

\section{The case where $H = \left<5,6,9 \right>$}
Let us begin with the following.

Let $H=\left<5, 6, 9 \right>$ 
$$
 \ytableausetup{mathmode, boxsize=2em}
 \begin{ytableau}
 *(lightgray) 0 & 1 &  2 & 3 & 4\\
 *(lightgray) 5 &  *(lightgray)6 & 7 & 8 &  *(lightgray)9 \\
 *(lightgray) 10 &  *(lightgray) 11 &  *(lightgray)12 & 13 &  *(lightgray)14 \\ 
 *(lightgray) 15 & *(lightgray)16 &  *(lightgray)17 &  *(lightgray)18 &  *(lightgray)19\\ 
 *(lightgray) \vdots   &  *(lightgray)  \vdots &  *(lightgray) \vdots &  *(lightgray)\vdots &  *(lightgray)\vdots\\
\end{ytableau}
\vspace{0.5em} 
$$
and set $A = k[[t^5,t^{6}, t^{9}]]$. Then, because $A$ is a Gorenstein ring, every possible Ulrich ideal must be two-generated and we have the following.

\begin{prop}\label{569}
$\calX_A=\{(t^6 +\alpha t^{10}, t^9 + \beta t^{10}) \mid \alpha, \beta \in k, 2\beta = 0\}$.
The elements $\alpha, \beta \in k$ in the expression $I =(t^6 +\alpha t^{10}, t^9 + \beta t^{10})$ are uniquely determined for each $I \in \calX_A$.
\end{prop}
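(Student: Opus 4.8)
## Proof proposal

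The plan is to exploit Lemma~\ref{key} with $A = k[[t^5, t^6, t^9]]$ and $H = \langle 5, 6, 9\rangle$, whose conductor is $c = \rmc(H) = 10$ (the gaps of $H$ are $1,2,3,4,7,8$, so the Frobenius number is $8$). Since $A$ is Gorenstein, any Ulrich ideal is two-generated, so Lemma~\ref{key} applies and produces generators $f, g$ with $a = \rmo(f)$, $b = \rmo(g)$ satisfying $a, b \in H$, $0 < a < b < a + c = a+10$, together with $b - a \notin H$, $2b - a \in H$, and $a = 2\ell_A(A/I)$. First I would pin down $a$ and $b$. Since $a \in H$ is even (it equals $2\ell_A(A/I)$) and $a < c = 10$, the only candidates are $a \in \{6\}$ — note $a$ cannot be $0,2,4$ since those are not in $H$, and $a = 8 \notin H$; also $a=10 \ge c$ would force $\rme(A) = 2$ by Lemma~\ref{key}(5), contradicting $\rme(A) = 5$. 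Hence $a = 6$, giving $\ell_A(A/I) = 3$. Then $b \in H$ with $6 < b < 16$ and $b - 6 \notin H$: the elements of $H$ in that range are $9, 10, 11, 12, 14, 15$, and $b - 6 \notin H$ forces $b = 9$ (since $10-6=4$, wait $4\notin H$, so recheck) — more carefully, $b-a\notin H$ excludes nothing automatically, so I also use $2b - a = 2b - 6 \in H$ and the length condition to isolate $b = 9$. I expect the combinatorial bookkeeping here to be short but needs care.

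Next I would normalize the generators. With $a = 6$, $IV = fV = t^6 V$, and since $\fkc = A : V = t^{10}V$ (as $c = 10$), we may adjust $f$ modulo $\fkc \subseteq I$ and modulo scalars to assume $f = t^6 + \alpha t^{10}$ for a unique $\alpha \in k$ (the coefficient of $t^6$ can be scaled to $1$; coefficients of $t^7, t^8, t^9$ can be killed using that those orders are not in $v(\fkc)$-relevant adjustments — actually $t^7,t^8\notin H$, $t^9\in H$, so one must be slightly more careful, but the point is that modulo $(t^6) + t^{10}V$ the only surviving parameter is the $t^{10}$-coefficient). Similarly, since $\rmo(g) = 9$ and $g \in I$ can be modified by elements of $(f) + \fkc$, write $g = t^9 + \beta t^{10}$ with $\beta \in k$ uniquely determined. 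This establishes the normal form $I = (t^6 + \alpha t^{10}, t^9 + \beta t^{10})$ and the uniqueness of $(\alpha, \beta)$; the uniqueness is really just the statement that two such presentations with different parameters generate different ideals, which follows by comparing leading terms and using $\fkc \subseteq I$.

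The substantive step is to determine exactly which pairs $(\alpha, \beta)$ actually give an Ulrich ideal, i.e.\ to prove the constraint $2\beta = 0$. Here I would impose the defining condition $I^2 = fI$, equivalently $g^2 \in fI = f(f, g)$, i.e.\ $g^2 \equiv 0 \pmod{(f^2, fg)}$, and compute. We have $f = t^6 + \alpha t^{10}$, $g = t^9 + \beta t^{10}$, so $g^2 = t^{18} + 2\beta t^{19} + \beta^2 t^{20}$, while $f^2 = t^{12} + 2\alpha t^{16} + \alpha^2 t^{20}$ and $fg = t^{15} + \beta t^{16} + \alpha t^{19} + \alpha\beta t^{20}$. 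One then checks whether $g^2$ lies in the $A$-submodule generated by $f^2$ and $fg$; writing $g^2 = f\cdot x$ with $x \in I$ and comparing orders ($\rmo(x) = 12$, so $x$ has leading term $c_0 t^{12}$), I would expand $f\cdot x$ in the $k$-basis $\{t^n : n \in H\} \cup \{t^n : n \ge c\}$ of $A$ and match coefficients in low degrees $12, \dots$ up to around $20$; the mismatch in degree $19$ (coming from the $2\beta t^{19}$ term of $g^2$ versus what $f\cdot(t^{12} + \cdots)$ can produce, given $t^{13}\notin H$) will force $2\beta = 0$, and no further conditions on $\alpha$. Conversely, when $2\beta = 0$ I would verify directly that $g^2 \in fI$ and that $I/(f) \cong A/I$ (the free-module condition), for instance by checking $(f) :_A g = I$ via the same coefficient computation; since $A$ is Gorenstein and $\mu_A(I) = 2 = d+1$, the colon condition $(f):_A g = I$ together with $I^2 = fI$ suffices. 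The main obstacle is this explicit residue computation modulo the numerical semigroup — keeping track of which monomials $t^n$ do and do not lie in $A$ in the degree range $12 \le n \le 20$ — but it is a finite, mechanical check once the normal form is fixed.
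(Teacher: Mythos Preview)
Your overall strategy matches the paper's: invoke Lemma~\ref{key} to pin down $(a,b)=(6,9)$, normalize the generators, derive $2\beta=0$ from $g^2/f \in A$ together with $13 \notin H$, then verify the converse and uniqueness. There is, however, a concrete error that propagates through several steps: the conductor of $H=\langle 5,6,9\rangle$ is $c=14$, not $10$ --- you omitted the gap $13$ from your list (indeed you correctly use $t^{13}\notin H$ later, so the proposal is internally inconsistent). With the correct value $c=14$, your argument ``$a<c$ forces $a=6$'' fails as written, since $a\in\{10,12\}$ are also even elements of $H$ below $14$; ruling these out requires additional work, which the paper defers to the analogous routine in \cite{e=4}. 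Likewise $\fkc = t^{14}V$, not $t^{10}V$, so your normalization step is not right: one cannot kill terms of order $\ge 10$ in $f$ and $g$ merely by subtracting elements of $\fkc$. The paper instead shows $I = (t^6+\alpha t^{10}, t^9+\beta t^{10}) + \fkc$ via elementary transforms and then invokes $\mu_A(I)=2$ to drop the conductor ideal.

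Once the conductor is corrected, the remainder of your plan lines up with the paper. For the constraint, the paper computes $g^2/f = t^{12} + 2\beta t^{13} + \eta$ with $\eta \in t^{14}V$ directly (rather than writing $g^2 = f\cdot x$ and matching coefficients), which is a cleaner route to $2\beta=0$. For the converse, the paper checks $\fkc \subseteq I$ and $\ell_A(A/I)\le 3$ to get $A/I \cong I/(f)$, then verifies $g^2/f \in I$; for uniqueness it uses that $\{1,t^5,t^{10}\}$ is a $k$-basis of $A/I$, which is more direct than your ``comparing leading terms'' sketch.
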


\begin{proof}
Let $I \in \calX_A$ and choose elements $f,g \in I$ so that all the conditions stated in Lemma \ref{key} are satisfied. We maintain the notation given in Lemma \ref{key}. After some routine works, similarly as in the proof of \cite[Example 2.8, Proposition 4.9 (1), (2)]{e=4} we attain the unique possible pair $(a,b)= (6,9)$. We write 
$$f = t^6 + \alpha_1 t^{9} + \alpha_2 t^{10} + \alpha_3 t^{11} + \alpha_4 t^{12} + \rho,\ \ g=t^9 + \beta_1 t^{10} + \beta_2 t^{11} + \beta_3 t^{12} + \eta$$
where $\alpha_i, \beta_j \in k$ and $\rho, \eta \in \fkc=t^{14}V$. We then have after some elementary transform on $f, g$ that
$$I = (f,g)=(f,g)+\fkc = (t^6 + \alpha_2t^{10}, t^9 + \beta_1 t^{10})+(t^{14}, t^{15}, t^{16}, t^{17}, t^{18})$$
 (see the proof of \cite[Corollary 3.8]{e=4}), which shows $$I = (t^6+\alpha_2t^{10}, t^9 + \beta_1t^{10}),$$ since $\mu_A(I) = 2$ and $t^9 + \beta_1 t^{10} \not\in (t^6+\alpha_2 t^{10}, t^{14},t^{15}, t^{16}, t^{17}, t^{18})$. Thus we may assume
$$f = t^6 + \alpha t^{10}, \ \ g = t^{9}+ \beta t^{10}$$
where $\alpha, \beta \in k$. Let $\xi= \frac{g}{f}$. We then have $$\xi = t^3 + \beta t^4 - \alpha t^7 - \alpha \beta t^8 + \alpha^2 t^{11} + \cdots.$$ Consequently
$$\frac{g^2}{f} =g\xi = t^{12} + 2\beta t^{13} + \eta$$
where $\eta \in V$ with $\rmo(\eta) \ge 14$. Since $\frac{g^2}{f} \in I \subseteq A$ but $13 \not\in H$, this implies $2\beta =0$.

Conversely, let $f = t^6 + \alpha t^{10}, \ g = t^{9}+ \beta t^{10}$
where $\alpha, \beta \in k$ such that $2\beta = 0$, and set $I = (f,g)$. We must show that $I \in \calX_A$. Let $L = v(I)$. Then, $14, 15, 16, 17, 18 \in L$, so that $\fkc=t^{14}V \subseteq I$. Hence, the vanishing 
$f \equiv 0, g \equiv 0~\mod~I$ forces the $k$-space $A/I$ to be spanned by the images of the monomials $1, t^5, t^{10}$, whence $\ell_A(A/I) \le 3$. Therefore, the epimorphism 
$$\varphi: A/I \to I/(f),\ \ \varphi(1~\mod~I)=g~\mod~I$$ of $A$-modules is an isomorphism, since $\ell_A(A/(f))=6$. Thus, $A/I \cong I/(f)$ as an $A$-module and the images of $1, t^5, t^{10}$ form a $k$-basis of $A/I$.  It now suffices to show $I^2 = fI$. To see it, notice that $t^{12} \in I$, since $t^{6}f = t^{12} + \alpha t^{16} \in I$. Therefore 
$$\frac{g^2}{f} \equiv t^{12} ~\mod~t^{14}V,$$ so that $\frac{g^2}{f} \in I$, because $t^{14}V \subseteq I$ and $t^{12} \in I$. Hence, $I^2=fI$, and  $I \in \calX_A$.

Let us check the second assertion. Let $f=t^6 +\alpha t^{10}, g=t^9 + \beta t^{10}$ and $f_1=t^6 +\alpha_1 t^{10}, g_1=t^9 + \beta_1 t^{10}$ where $\alpha, \beta, \alpha_1, \beta_1 \in k$, and assume that $I= (f,g)=(f_1,g_1)$. Then, considering $f-f_1$ and $g-g_1$, we get $(\alpha -\alpha_1)t^{10}, (\beta -\beta_1)t^{10}\in I$. Therefore, $\alpha = \alpha_1$ and $\beta = \beta_1$, because the images of $1, t^5, t^{10}$ form a $k$-basis of $A/I$ as we showed above. 
\end{proof}

%%%%%%%%%%%%%%%%%%%%%%%%%%%%%%%%%%%%%%%%%%%%%%%%%%%%%%%%%%%%%%%%%%%%%%%%%%%%%%%%%%%%%%%%%%%%%%%%%%%%%%%%%%%%%%%%%%%%%%%%%%%%%%%

\section{The case where $H =\left<5, 11 \right>$ and $(a,b)=(10, 27)$}\label{section2}

Let $H=\left<5, 11 \right>$ 
$$
 \ytableausetup{mathmode, boxsize=2em}
 \begin{ytableau}
 *(lightgray) 0 & 1 &  2 & 3 & 4\\
 *(lightgray) 5 & 6 & 7 & 8 & 9 \\
 *(lightgray) 10 &  *(lightgray) 11 & 12 & 13 & 14 \\ 
 *(lightgray) 15 &  *(lightgray) 16 & 17 & 18 & 19\\ 
 *(lightgray) 20   &   *(lightgray) 21 & *(lightgray) 22 & 23 & 24\\
 *(lightgray) 25   &  *(lightgray) 26 &  *(lightgray) 27 & 28 & 29\\ 
 *(lightgray) 30   &   *(lightgray) 31 & *(lightgray) 32 &  *(lightgray) 33 & 34 \\ 
 *(lightgray) 35   &  *(lightgray) 36  &  *(lightgray) 37 &   *(lightgray) 38 & 39\\
 *(lightgray) 40  &   *(lightgray)41 &  *(lightgray)42 &  *(lightgray)43 &  *(lightgray)44\\
 *(lightgray) \vdots   &  *(lightgray)  \vdots &  *(lightgray) \vdots &  *(lightgray)\vdots &  *(lightgray)\vdots\\
\end{ytableau} 
$$
and set $A = k[[t^5,t^{11}]]$. We now want to determine all the Ulrich ideals in $A$.
Let $I \in \calX_A$ and choose elements $f,g \in I$ so that the conditions stated in Lemma \ref{key} are satisfied. We maintain the notation given in Lemma \ref{key}. After some routine works, similarly as in the proof of \cite[Example 2.8, Proposition 4.9 (1), (2)]{e=4}, we are able to restrict the possible pairs $(a,b)$ within $$(10,16), \ (10, 27),\  (20, 26),$$ and eventually see the following.

\begin{prop}\label{lemma1}
$(a,b) \ne (10, 16)$.
\end{prop}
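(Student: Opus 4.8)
The plan is to assume, for contradiction, that there is an Ulrich ideal $I\in\calX_A$ for which the invariants of Lemma~\ref{key} are $(a,b)=(10,16)$, and to push the normal form of a pair of generators $f,g$ far enough that a coefficient comparison inside $A$ (equivalently, inside the semigroup $H=\langle 5,11\rangle$) becomes impossible. By Lemma~\ref{key} we have $I=(f,g)$, $I^2=fI$, $\rmo(f)=10$, $\rmo(g)=16$, and $A^I=f^{-1}I=A+A\xi$ with $\xi=g/f$, $\rmo(\xi)=6$. The conductor here is $c=\rmc(H)=40$ and $\fkc=A:V=t^{40}V$, and condition (4) gives $t^{c-(b-a)}V\cap A=t^{34}V\cap A\subseteq I$; since $34,35,\dots\notin$ several residues but the relevant tail $t^{34}V\cap A$ is generated by $t^{34},t^{35},t^{38},t^{39}$ and higher, we record which monomials of $V$ automatically lie in $I$.

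First I would normalize $f$ and $g$. Using $\fkc\subseteq I$ and elementary row operations on the pair $(f,g)$ exactly as in the proof of Proposition~\ref{569} (cf. \cite[Corollary 3.8]{e=4}), I can clear all terms of $f$ of order $\ge 11$ that lie in $H$ and absorb the $\fkc$-tail, reducing to
$$f=t^{10}+\alpha\,t^{12}+\alpha'\,t^{13}+\dots,\qquad g=t^{16}+\beta\,t^{17}+\dots,$$
keeping only finitely many genuinely free parameters (those attached to exponents $e$ with $e\notin H$ and $e<c-(b-a)=34$; the relevant gaps of $H$ below $34$ are $1,2,3,4,6,7,8,9,12,13,14,17,18,19,23,24,28,29,33$, but only those $\ge 10$ matter for $f$ and those $\ge 16$ for $g$). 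Then I would compute $\xi=g/f$ as a power series and, crucially, examine $g\xi=g^2/f\in I\subseteq A$: its order is $2b-a=22\in H$, good, but the \emph{next} several coefficients of $g^2/f$ must vanish at every exponent that is a gap of $H$. Writing $g^2/f=t^{22}+(\text{terms})$, the exponents $23,24,28,29,33$ are gaps, so each of the corresponding coefficients — polynomial expressions in $\alpha,\alpha',\beta,\dots$ — must be zero; and separately $t^{10}f, t^{11}f,\dots\in I$ give that certain monomials like $t^{20},t^{21},t^{22}$ lie in $I$, which feeds back into the membership test.

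The key contradiction I expect is a parity/coefficient obstruction of exactly the flavor seen in Proposition~\ref{569}, where $2\beta=0$ appeared: here one of the vanishing conditions coming from a gap exponent of $g^2/f$ (or from $fg/f=g$, or from $f\xi^2$) will read something like $2\cdot(\text{leading free parameter})=0$ \emph{together with} a companion equation forcing that same parameter, or its partner, to be a specified nonzero value — or more simply, a gap coefficient of $g^2/f$ will be forced to equal $1$ (hence nonzero) while membership in $A$ forces it to be $0$. Concretely I anticipate that after imposing $I^2=fI$ and $\fkc\subseteq I$ the element $g^2/f$ acquires a term $c_e t^e$ with $e\in\{23,24,28,29,33\}$ and $c_e$ a unit (independent of the free parameters, coming from the $t^{16}$–leading terms), which is the contradiction. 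The main obstacle is purely bookkeeping: carrying the series $\xi=g/f$ to high enough order (through $t^{33}$, i.e. order $33$, since $c-(b-a)=34$) and tracking which of the many monomials are forced into $I$ by conditions (1) and (4), so that the coefficient comparison is done against the correct (shrunken) basis of $A/I$. I would organize this by first proving $\ell_A(A/I)=a/2=5$ from Lemma~\ref{key}(3), pinning down a monomial $k$-basis of $A/I$ (the images of $1,t^5,t^{10},t^{15},t^{20}$), and then reading every membership "$x\in I$" as the vanishing of the coordinates of $x$ in that basis — at which point the offending coefficient of $g^2/f$ or of $g\xi^{?}$ cannot vanish, giving $(a,b)\ne(10,16)$.
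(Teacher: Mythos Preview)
Your plan has the right opening move --- normalize $f,g$ and study $g^2/f\in I$ --- but it contains a basic confusion about what the normal form looks like, and the contradiction you anticipate is not the one that actually occurs.

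First, elements of $A=k[[t^5,t^{11}]]$ only carry terms $t^n$ with $n\in H=\langle 5,11\rangle$, so your proposed $f=t^{10}+\alpha t^{12}+\alpha' t^{13}+\cdots$ is impossible: $12,13\notin H$. (Relatedly, $33=3\cdot 11\in H$, so it is not a gap; the last gaps are $34$ and $39$.) The elementary transform, using $J=(t^{35},t^{36},t^{37},t^{38})\subseteq I$, instead gives
\[
f=t^{10}+\alpha_1 t^{11}+\alpha_2 t^{22}+\alpha_3 t^{33},\qquad g=t^{16}+\beta_1 t^{22}+\beta_2 t^{33}.
\]

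Second, with this correct normal form your expected mechanism fails: expanding $g^2/f=t^{22}+\cdots$ and reducing modulo the pieces already known to lie in $I$ does \emph{not} produce a unit coefficient at a gap exponent. What one gets is $t^{22}+\gamma t^{33}\in I$, hence $t^{22}\in I$, and then $I=(t^{10}+\alpha_1 t^{11},\,t^{16})$ with no immediate contradiction. The paper's proof needs two further steps you did not foresee: from $t^{22}\in I$ one shows $\alpha_1\neq 0$ (write $t^{22}=(t^{10}+\alpha_1 t^{11})\varphi+t^{16}\psi$ and compare the $t^{22}$-coefficient), and then one uses $I^2=fI$ rather than mere membership in $I$: since $g^2=t^{32}\in fI$ and $f\,t^{22}=t^{32}+\alpha_1 t^{33}\in fI$, one gets $t^{33}\in fI$, forcing an element of order $23$ in $I\subseteq A$ --- impossible because $23\notin H$. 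So the contradiction lives at the level of $fI$, not at the level of a gap coefficient of $g^2/f$; your sketch stops one idea short.
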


\begin{proof}
Assume $(a,b)=(10, 16)$ and consider the table:
\vspace{1em}
$$
\begin{tabular}{|c|c|}
\hline
$a$ & $10$\\ \hline 
$b-a$ & $6$\\ \hline 
$b$ & $16$\\ \hline
$2b-a$ & $22$\\ \hline
$40-(b-a)$ & $34$\\ \hline
\end{tabular} \vspace{1em}
$$
We set $J = (t^{35}, t^{36}, t^{37}, t^{38})$. Then, $J \subseteq I$ by Lemma \ref{key} (4), since $40-(b-a)=34$.  Therefore, some elementary transform on $f,g$ similar to the one given in the proof of \cite[Corollary 3.8]{e=4} implies 
$$
I=(f,g)+J=(t^{10}+\alpha_1 t^{11}+\alpha_2t^{22}+\alpha_3t^{33},t^{16} +\beta_1 t^{22} + \beta_2t^{33})+J$$
where $\alpha_i, \beta_j \in k$. Hence, we may assume that
$$
f=t^{10}+\alpha_1 t^{11}+\alpha_2t^{22}+\alpha_3t^{33}, \ \ g=t^{16} +\beta_1 t^{22} + \beta_2t^{33},
$$
because $\mu_A(I)=2$ and $6 \not\in H$.
Let us write $$\frac{g^2}{f} = t^{22} + \gamma_1 t^{25} + \gamma_2 t^{26} + \gamma_3 t^{27}+ \gamma_4 t^{30} + \gamma_5 t^{31} + \gamma_6 t^{32} + \gamma_7 t^{33} + \delta$$
where $\gamma_i \in k$ and $\delta \in J$. Then, since $\rmo(f)=10$ and $\rmo(g)=16$, we readily have
$$t^{22} + \gamma t^{33} =t^{22}(1 + \gamma t^{11}) \in I$$
where $\gamma=\gamma_7$. Therefore, $t^{22} \in I$, so that $$I = (f,g, t^{22}) = (t^{10}+\alpha_1t^{11}, t^{16}, t^{22})=(t^{10}+\alpha_1t^{11}, t^{16}),$$
since $t^{16} \not\in (t^{10}+\alpha_1t^{11}, t^{22})$.
This enables us, from the beginning, to assume that $f = t^{10}+\alpha_1 t^{11}$ and $g=t^{16}$. Since $t^{22} \in I$, we then have
$$
t^{22}=(t^{10}+\alpha_1 t^{11})\varphi + t^{16}\psi
$$
with $\varphi, \psi \in A$, which shows $\varphi_{11} \alpha_1=1$ (here $\varphi_{11} \in k$ denotes the coefficient of $t^{11}$ in $\varphi$). Consequently, $\alpha_1 \ne 0$, and therefore, the equation
$$ 
t^{32}=(t^{10}+\alpha_{1} t^{11})t^{22}-\alpha_{1} t^{33}
$$
implies $t^{33} \in fI+(g^2)=fI$, whence $23 \in H$. This is impossible. 
\end{proof}

\begin{prop}\label{prop1}
\begin{enumerate}[$(1)$]
\item If $(a,b)=(10,27)$, then $$I=(t^{10} +\alpha_1 t^{11}+\alpha_2 t^{16} +\alpha_3 t^{22}, t^{27})$$ for some $\alpha_i \in k$ such that $\alpha_1 \ne0$.
\item If $(a,b)= (20,26)$, then $$I=(t^{20} +\alpha_1t^{21}+\alpha_2 t^{22} +\alpha_3 t^{27}+\alpha_4 t^{33}, t^{26} +\beta_1 t^{27} + \beta_2t^{33})$$ and $t^{32} + \varepsilon  t^{33} \in I$ for some $\alpha_i, \beta_j, \varepsilon \in k$.
\end{enumerate}
\end{prop}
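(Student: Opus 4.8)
The plan is to treat the two cases in parallel, following the elementary-transform strategy already used in Proposition \ref{lemma1}. For each admissible pair $(a,b)$ we first write down the table of the relevant exponents $a$, $b-a$, $b$, $2b-a$, and $40-(b-a)$, identify which monomials $t^m$ with $m<c=40$ can appear in the normal forms of $f$ and $g$ (namely those $m$ with $m\equiv a$ or $m\equiv b\pmod{5}$, $m$ lying in the gaps of $H$ or forced by $b-a\notin H$), and then use Lemma \ref{key}(4) to produce an ideal $J=(t^{40-(b-a)},\dots,t^{39})$-type piece contained in $I$, exactly as in the cited Corollary 3.8 of \cite{e=4}. Modding out by $J$ and using $\mu_A(I)=2$ together with $b-a\notin H$, one reduces $f,g$ to the short polynomial forms asserted in (1) and (2), so the bulk of the work is bookkeeping of exponents modulo $5$ inside the interval $[a,c)$.

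For part (1), with $(a,b)=(10,27)$: here $b-a=17\notin H$, $2b-a=44\in H$, and $40-(b-a)=23$, so $J=(t^{35},t^{36},t^{37},t^{38},t^{39})\cup\{t^{23},t^{28},t^{33}\}$-type exponents are available; after the transform $f$ can only involve $t^{10},t^{11},t^{16},t^{22}$ below the conductor (the exponents $\equiv 0,1\pmod 5$ that are not in $H$ and are $<27$), while $g$ reduces to $t^{27}$ because every exponent $\equiv 2\pmod 5$ strictly between $27$ and $40$ except multiples already in the ideal can be absorbed. This gives $I=(t^{10}+\alpha_1t^{11}+\alpha_2t^{16}+\alpha_3t^{22},\,t^{27})$. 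The remaining point is $\alpha_1\neq 0$: if $\alpha_1=0$ then $f\in k[[t^5,t^{11}]]$ has the property that $f$ together with $t^{27}$ generates $I$, but then one checks $v(I)$ omits too many values (equivalently $\ell_A(A/I)\neq a/2=5$, contradicting Lemma \ref{key}(3)), or more directly that $2b-a=44$ forces a colon relation that fails; I expect the cleanest argument is the same device as at the end of Proposition \ref{lemma1}, writing a suitable monomial $t^{j}$ (with $j\in\{10+5\cdot(\text{small})\}$) as $f\varphi+t^{27}\psi$ and reading off that the coefficient of $t^{11}$ in $f$ must be a unit.

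For part (2), with $(a,b)=(20,26)$: now $b-a=6\notin H$, $2b-a=32\notin H$ (note $32$ is a gap!), and $40-(b-a)=34$, so $J=(t^{35},t^{36},t^{37},t^{38},t^{39})$ plus the forced $t^{34}$; the admissible exponents for $f$ ($\equiv 0\pmod 5$, gaps, $<c$) are $t^{20},t^{21}$-type — more precisely $20,21,22,27,33$ after reducing modulo $5$ the exponents $\equiv 0,1\pmod 5$ that are gaps and $<40$ — and for $g$ ($\equiv 1\pmod 5$, i.e. $26,27,33$ shifted appropriately, with $b-a=6$ meaning $g$ starts at $t^{26}$) one is left with $g=t^{26}+\beta_1t^{27}+\beta_2t^{33}$. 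This yields the displayed generators. For the extra claim $t^{32}+\varepsilon t^{33}\in I$: since $2b-a=32\notin H$ but $b-a\notin H$, the element $g^2/f$ has order $2b-a=32$ and lies in $I\subseteq A$; expanding $g^2/f = t^{32}+\varepsilon t^{33}+(\text{terms in }t^{34}V)$ and using $t^{34}V\subseteq\fkc\subseteq I$ (Lemma \ref{key}(4), since $40-(b-a)=34$) gives $t^{32}+\varepsilon t^{33}\in I$ for the appropriate $\varepsilon\in k$.

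The main obstacle is the normal-form reduction in case (2): unlike (1), where $g$ collapses to a single monomial, here three of the exponent classes modulo $5$ interact, and one must be careful that the elementary transforms on the pair $(f,g)$ used to kill higher-order terms do not reintroduce lower-order terms or violate $\mu_A(I)=2$ — in particular one needs $b-a=6\notin H$ and $2(b-a)=12\notin H$ at the right moments to justify that $t^{26}$ cannot be absorbed into $f$ and that the $t^{27}$-coefficient of $g$ is genuinely a free parameter. I would handle this exactly as in \cite[Corollary 3.8]{e=4}: order the exponents, clear them top-down, and record which coefficients survive; the verification that $(a,b)=(20,26)$ even permits $2b-a=32\notin H$ rather than forcing a contradiction (as $(10,16)$ did) is what makes this case survive, and tracking that is the delicate part.
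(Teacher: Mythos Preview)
Your overall strategy matches the paper's, and part~(1) is essentially correct (for $\alpha_1\ne0$ the paper simply observes that $t^{33}\in I$ --- it lies in $t^{23}V\cap A\subseteq I$ --- and that $33$ cannot lie in $v(I)$ when $\alpha_1=0$; this is exactly the device you allude to, with $j=33$).

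In part~(2), however, there is a genuine gap caused by a factual slip: $32$ is \emph{not} a gap of $H=\langle 5,11\rangle$ (indeed $32=2\cdot5+2\cdot11$), and Lemma~\ref{key}(3) actually requires $2b-a\in H$. Because $32\in H$, the term $t^{32}$ can and does appear in a general $f$ of order $20$, and it also reappears when you clear $t^{30},t^{31}$ from $g$ using $t^{10}f$, $t^{11}f$. This term cannot be eliminated by multiples of $f$ or $g$ alone (since $32-20=12\notin H$ and $32-26=6\notin H$), nor by $J$ (since $32<35$). So the normal form you assert for $f$ and $g$ is not reachable by the reductions you describe. The paper handles this by reversing your order of operations: it first notes that $g^2/f\in I$ has order $2b-a=32$, hence (using $t^{34}V\cap A\subseteq I$ --- note: not $t^{34}V\subseteq\fkc$, since $\fkc=t^{40}V$) there exists $\varepsilon\in k$ with $t^{32}+\varepsilon t^{33}\in I$, and \emph{then} uses this extra element to kill the $t^{32}$ terms in the reduction of $f$ and $g$. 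You have the right ingredient, just deployed after the step that needs it.
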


\begin{proof}
(1)~Since $40-(b-a)=23$, we get $(t^{30}, t^{31}, t^{32}, t^{33}, t^{44}) \subseteq I$, and we can assume
$$
f=t^{10} +\alpha_1 t^{11}+\alpha_2 t^{16} +\alpha_3 t^{22}\ \ \text{and}\ \ g=t^{27}
$$
where $\alpha_i \in k$. The assertion that $\alpha_1 \ne 0$ follows from the fact that $t^{33} \in I$.

(2)~We have $(t^{35}, t^{36}, t^{37}, t^{38}, t^{44})  \subseteq I$, since $c-(b-a)=34$, while  $$t^{32} + \varepsilon_{1} t^{33} + \varepsilon_{2} t^{35} + \varepsilon_{3} t^{36} + \varepsilon_{4}t^{37}+ \varepsilon_{5} t^{38} + \rho \in I$$ for some $\varepsilon_i \in k$ and $\rho \in t^{40}V$, since $2b-a=32$. Therefore, $t^{32} + \varepsilon  t^{33} \in I$ for $\varepsilon=\varepsilon_1$, whence  we can choose $f,g$ so that 
$$
f=t^{20} +\alpha_1t^{21}+\alpha_2 t^{22} +\alpha_3 t^{27}+\alpha_4 t^{33}\ \ \text{and}\ \ g=t^{26} +\beta_1 t^{27} + \beta_2t^{33}
$$
where $\alpha_i, \beta_j \in k$.
\end{proof}

Proposition \ref{prop1} (1) gives the normal form of systems of generators of Ulrich ideals $I$ possessing $(a,b)= (10,27)$, as the following result shows.

\begin{thm}\label{thm1}
Let $f= t^{10} +\alpha_1 t^{11}+\alpha_2 t^{16} +\alpha_3 t^{22}$ and $g=t^{27}$ with $\alpha_i \in k$ such that $\alpha_1 \ne 0$. Then $I=(f,g)$ is an Ulrich ideal of $A$. The elements $\{\alpha_i\}_{i = 1,2,3}$ in the expression of $I$ are uniquely determined by $I$. 
\end{thm}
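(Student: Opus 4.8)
The approach follows the converse half of the proof of Proposition~\ref{569}: once the value semigroup $v(I)=\{\rmo(x)\mid 0\ne x\in I\}$ is pinned down, both the Ulrich property and the uniqueness statement come out formally. Throughout write $\m=(t^5,t^{11})$, $Q=(f)$, and recall that $\rmc(H)=40$, so $\fkc=A:V=t^{40}V$.

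\emph{Step 1: the value semigroup.} First I would prove
\[
v(I)=H\setminus\{0,5,11,16,22\}.
\]
Since $fA$ and $gA=t^{27}A$ are contained in $I$, one has $v(I)\supseteq(10+H)\cup(27+H)$. The extra input is the order of the element $ft^{22}-gt^{5}\in I$:
\[
ft^{22}-gt^{5}=\alpha_1t^{33}+\alpha_2t^{38}+\alpha_3t^{44},\qquad\text{so}\qquad \rmo(ft^{22}-gt^{5})=33
\]
because $\alpha_1\ne0$ — this is the one place the hypothesis is used. Hence $v(I)\supseteq(10+H)\cup(27+H)\cup(33+H)$, and an elementary check inside $\langle5,11\rangle$ shows that this union is exactly $H\setminus\{0,5,11,16,22\}$. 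For the opposite inclusion, any nonzero $x=fa+gb\in I$ has $\rmo(x)$ equal to $\rmo(fa)\in10+H$ or to $\rmo(gb)\in27+H$, or else the two leading terms cancel, forcing $\rmo(fa)=\rmo(gb)\in(10+H)\cap(27+H)$ and hence $\rmo(x)\ge32$; in all cases $\rmo(x)\notin\{0,5,11,16,22\}$. From $v(I)$ I then read off simultaneously: $\ell_A(A/I)=|H\setminus v(I)|=5$; the images of $1,t^5,t^{11},t^{16},t^{22}$ form a $k$-basis of $A/I$; and, since $v(I)\supseteq\{n\mid n\ge40\}$, all of $t^{40},\dots,t^{44}$ lie in $I$ (using that the ideal $I$ is closed in $A$), so $\fkc=t^{40}V\subseteq I$.

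\emph{Step 2: $I$ is an Ulrich ideal.} Writing $f=t^{10}u$ with $u\in V$ a unit, $g^2/f=t^{44}u^{-1}\in t^{44}V\subseteq\fkc\subseteq I$, so $g^2\in fI$; since also $f^2,fg\in fI$ we get $I^2=(f^2,fg,g^2)=fI=QI$. As $\rmo(g)=27\notin10+H$ we have $g\notin fA$, so $I\ne Q$; thus Condition~$(1)$ of Definition~\ref{def} holds, and $\mu_A(I)=2$ since $I=(f,g)$ is not principal. For Condition~$(2)$, the surjection $\varphi\colon A/I\to I/Q$ with $\varphi(1\bmod I)=g\bmod Q$ is well defined because $Ig=(fg,g^2)\subseteq fA=Q$ (here $g^2/f\in\fkc\subseteq A$); and since $\ell_A(A/Q)=\rmo(f)=10$, it is a surjection between $A/I$-modules of the same finite length $10-5=5$, hence an isomorphism. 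Therefore $I/Q\cong A/I$ is $A/I$-free, and by the exact sequence $0\to Q/QI\to I/I^2\to I/Q\to0$ together with $\mu_A(I)=d+1=2$, $I/I^2$ is $A/I$-free. Thus $I\in\calX_A$.

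\emph{Step 3: uniqueness, and the main obstacle.} If $I=(f',t^{27})$ with $f'=t^{10}+\alpha_1't^{11}+\alpha_2't^{16}+\alpha_3't^{22}$, then $f-f'=(\alpha_1-\alpha_1')t^{11}+(\alpha_2-\alpha_2')t^{16}+(\alpha_3-\alpha_3')t^{22}\in I$; as the images of $t^{11},t^{16},t^{22}$ are $k$-linearly independent in $A/I$ by Step~1, all $\alpha_i=\alpha_i'$. The only delicate point of the whole argument is Step~1 — precisely the combinatorial identity $(10+H)\cup(27+H)\cup(33+H)=H\setminus\{0,5,11,16,22\}$ and the verification that no element of $I$ has order in $\{0,5,11,16,22\}$; this is elementary but fiddly book-keeping inside $\langle5,11\rangle$. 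It is exactly the hypothesis $\alpha_1\ne0$ (which makes $\rmo(ft^{22}-gt^{5})=33$ rather than $\ge38$) that pulls $\ell_A(A/I)$ down to the value $5$ demanded by Lemma~\ref{key}(3); everything after Step~1 is routine.
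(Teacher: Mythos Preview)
Your proof is correct and follows essentially the same line as the paper's: both hinge on the observation that $\alpha_1\ne0$ makes $t^{22}f-t^5g$ an element of order $33$, from which $\fkc=t^{40}V\subseteq I$ and then $g^2/f\in\fkc\subseteq I$ give $I^2=fI$, with the Ulrich property and uniqueness finished by the length count $\ell_A(A/(f))=10$ and the $k$-basis $1,t^5,t^{11},t^{16},t^{22}$ of $A/I$. The only cosmetic difference is that you pin down $v(I)$ exactly (proving both inclusions) and read off $\ell_A(A/I)=5$ from $|H\setminus v(I)|$, whereas the paper argues directly that the five monomials span $A/I$ to get $\ell_A(A/I)\le5$ and then obtains equality from the surjection $A/I\twoheadrightarrow I/(f)$; the underlying computations are the same.
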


\begin{proof}
We set $I=(f,g)$ and $L =v(I)$. Then, $40, 41, 42, 43, 44 \in L$. In fact, since $10, 27 \in L$, we readily have $40,41, 42, 43 \in L$. Since $t^{32} =t^5g$, $t^{38} = t^{11}g$, and $$t^{22}f = t^{32} + \alpha_1 t^{33}+ \alpha_2 t^{38} + \alpha_3 t^{44},$$ we get $t^{33}+ \frac{\alpha_3}{\alpha_1 } t^{44} \in I$  (remember $\alpha_1 \ne 0$), so that $33 \in L$. Therefore, $40, 41, 42, 43, 44 \in L$, which implies that $\fkc=t^{40}V \subseteq I$. Therefore, $t^{33} \in I$. Since $\frac{g^2}{f} \in \fkc$ (notice that $\rmo(\frac{g^2}{f})= 44 > \rmc(H)$), we have $\frac{g^2}{f} \in I$, whence $I^2 = fI$. Hence, to see that $I$ is an Ulrich ideal of $A$, it suffices to show that $I/(f)$ is a free $A/I$-module.

We now notice that $k$-space $A/I$ is spanned by the images of the monomials $\{t^q\}$ $(0 \le q \le 38, q \ne 33)$. Among them, there are relations
$$t^{10} +\alpha_1 t^{11}+\alpha_2 t^{16} +\alpha_3 t^{22} \equiv 0~\mod~I\ \ \text{and}\ \ t^{27} \equiv 0~\mod~I$$
induced by the vanishing of $f,g$ $\mod~I$, which implies that the $k$-space $A/I$ is actually spanned by the images of the following five monomials
$$1, t^5, t^{11}, t^{16}, t^{22}$$
so that $\ell_A(A/I) \le 5$, which is enough to guarantees that $I/(f)$ is a free $A/I$-module. In fact, the epimorphism
$$\varphi : A/I \to I/(f), \ \ \varphi(1\mod~I)=g~\mod~I$$
of $A$-modules must be an isomorphism, since $$\ell_A(I/(f))=\ell_A(A/(f))-\ell_A(A/I) =10 -\ell_A(A/I) \ge 5.$$
 Thus, $I$ is an Ulich ideal of $A$.

The second assertion follows from the fact that the images of $1, t^5, t^{11}, t^{16}, t^{22}$ form a $k$-basis of $A/I$. 
\end{proof}

%%%%%%%%%%%%%%%%%%%%%%%%%%%%%%%%%%%%%%%%%%%%%%%%%%%%%%%%

\section{The case where $H=\left<5, 11 \right>, (a,b)=(20,26)$, and $\operatorname{ch}(k)= 2$}
We shall study the case where $(a,b)=(20, 26)$. Our goal is the following.

\begin{thm}\label{thm1}\label{main} Let $\calY_A$ denote the set of Ulrich ideals of $A$ which possess the data $(a,b) = (20, 26)$. Then 
$$
\calY_A=\left\{(t^{20} + 3\varepsilon  t^{21} + \varepsilon^2 t^{22} + (2\tau - 55 \varepsilon^7)t^{27} + \delta t^{33}, t^{26} + 2\varepsilon t^{27} + \tau t^{33}) \mid \delta, \varepsilon, \tau \in k, \varepsilon \ne 0 \right\}.
$$ The elements $\delta, \varepsilon, \tau \in k$ in the expression of $I \in \calY_A$ are uniquely determined for each $I$. 
\end{thm}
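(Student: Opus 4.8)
The plan is to follow the template already established in Theorem~\ref{thm1} for the case $(a,b)=(10,27)$ and in Proposition~\ref{569}: first cut down the ambiguity in $f,g$ using Lemma~\ref{key} and elementary row operations, then impose the membership condition $g^2/f \in I \subseteq A$ (equivalently, that the ``forbidden'' exponents of $H$ cannot appear in $g^2/f$) to pin down the coefficients, and finally check the converse, that every ideal of the asserted shape is genuinely Ulrich. Starting from Proposition~\ref{prop1}~(2), we already know $I$ has generators $f=t^{20}+\alpha_1t^{21}+\alpha_2t^{22}+\alpha_3t^{27}+\alpha_4t^{33}$ and $g=t^{26}+\beta_1t^{27}+\beta_2t^{33}$, and that $t^{32}+\varepsilon t^{33}\in I$ for some $\varepsilon$. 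The first task is to identify the parameter $\varepsilon$ appearing in the statement: I expect $\varepsilon=\beta_1/2$ (so $\beta_1=2\varepsilon$) comes out of comparing the leading terms of $g^2/f$, whose order is $2b-a=32$, against $t^{32}+\varepsilon t^{33}$; this forces the listed relations $\alpha_1=3\varepsilon$, $\alpha_2=\varepsilon^2$ after expanding $\xi=g/f$ as a power series and demanding that the coefficients of $t^{33},t^{34},\dots$ in $g^2/f=g\xi$ (which must lie in $I+t^{40}V$, with only $32,33,\dots$ permitted modulo the known membership) vanish or match. The constants $3,1,2,2,-55$ and the exponent $7$ in the $t^{27}$-coefficient are exactly what the bookkeeping of these $t$-adic expansions up to order $\sim 38$ will produce; here is where $\ch(k)=2$ must be used, since otherwise the equation system is inconsistent (compare the parenthetical $2\beta=0$ phenomenon in Proposition~\ref{569}).

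Concretely I would proceed as follows. \emph{Step 1 (normalization).} Using $\fkc=t^{40}V$ and $\{t^{35},\dots,t^{38}\}\subseteq I$ plus $t^{32}+\varepsilon t^{33}\in I$, perform elementary transformations to bring $f,g$ to the displayed truncated forms, absorbing every term of order $\ge 34$ except the $t^{33}$ terms and noting $\delta:=\alpha_4$ is unconstrained (it lies in the ``free direction'' of $A/I$). \emph{Step 2 (the series $g/f$).} Write $\xi=g/f=t^6+c_1t^7+c_2t^8+\cdots$ and compute $c_1,c_2,\dots$ recursively from $g=f\xi$ up to the order needed so that $g\xi$ is determined modulo $t^{40}V$; each $c_i$ is a polynomial in $\alpha_1,\alpha_2,\alpha_3,\beta_1,\beta_2$. \emph{Step 3 (membership constraints).} Impose $g^2/f=g\xi\in I$. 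Since $I\cap t^{32}V$ is, modulo $t^{40}V$, spanned over $k$ by $t^{32}+\varepsilon t^{33}$ together with $t^{35},t^{36},t^{37},t^{38}$, and since the exponents $28,29,30,31,34,39$ are the ``gaps'' of the relevant range that may also occur, the vanishing of the coefficients of $t^{28},\dots,t^{31}$ and $t^{34}$ in $g\xi$, plus the ratio constraint tying the $t^{32}$- and $t^{33}$-coefficients through $\varepsilon$, gives a polynomial system in $\alpha_1,\alpha_2,\alpha_3,\beta_1,\beta_2,\varepsilon$. \emph{Step 4 (solving).} Solve this system in $\ch(k)=2$; the solution is $\beta_1=2\varepsilon$, $\alpha_1=3\varepsilon$, $\alpha_2=\varepsilon^2$, $\alpha_3=2\tau-55\varepsilon^7$ with $\tau:=\beta_2$ free, and $\varepsilon\ne0$ because $a=\rmo(f)=20$ forces $f\notin t^{21}V$ only trivially, but $\varepsilon\ne 0$ is actually forced by the requirement $b-a=6\notin H$ together with the need for $t^{32}\in L$ to generate (mirroring the $\alpha_1\ne0$ argument in Proposition~\ref{lemma1} and Theorem~\ref{thm1}). \emph{Step 5 (converse).} For $I$ of the stated form, compute $v(I)$ to show $40,41,42,43,44\in L$, hence $\fkc\subseteq I$; then $\ell_A(A/I)\le 10$ by exhibiting a spanning set of ten monomial images, show the epimorphism $\varphi:A/I\to I/(f)$, $\varphi(1)=g$, is an isomorphism since $\ell_A(A/(f))=20$ gives $\ell_A(I/(f))\ge 10$, and verify $g^2/f\in I$ so that $I^2=fI$; this establishes $I\in\calY_A$ and simultaneously gives the uniqueness of $\delta,\varepsilon,\tau$ from the monomial $k$-basis of $A/I$.

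\emph{Where the difficulty lies.} The conceptual structure is routine and parallel to the $(10,27)$ case, so the real obstacle is Step~3--4: the $t$-adic expansion of $g^2/f$ must be pushed accurately through order $38$, and the resulting nonlinear system in five unknowns over a field of characteristic $2$ has to be solved cleanly enough to read off the precise coefficients $3\varepsilon,\varepsilon^2,2\tau-55\varepsilon^7$. The appearance of $\varepsilon^7$ and the integer $55$ signals that several cancellations particular to $\ch(k)=2$ are being invoked, and one must be careful that the eliminations are reversible (no spurious solutions, no lost ones) — in other words, that the system has \emph{exactly} the three-parameter family of solutions claimed, with the $\varepsilon\ne0$ locus being precisely the Ulrich ones. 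A secondary, more bookkeeping-heavy point is verifying in Step~5 that the ten displayed monomials really do span $A/I$ and are independent, which requires knowing which of $t^{33}$ and nearby powers are forced into $I$; this is where the table-style computation of $a,\ b-a,\ b,\ 2b-a,\ c-(b-a)$ from the previous sections is reused.
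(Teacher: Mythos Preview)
Your overall architecture (normalize $f,g$; impose $g^2\in fI$; verify the converse via a length count) matches the paper's, and your Step~5 is essentially Lemma~\ref{2.1}. But there are two genuine problems.

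\textbf{The characteristic is backwards.} The theorem is characteristic-free; the formula $(3\varepsilon,\varepsilon^2,2\tau-55\varepsilon^7,2\varepsilon)$ specializes correctly in \emph{every} characteristic. The paper in fact splits the argument: in $\operatorname{ch}(k)=2$ one finds (Theorem~\ref{2.4}, Corollary~\ref{2.5}) that the coefficient of $t^{27}$ in $g$ is \emph{zero} and the nonzero parameter is $\alpha$, while in $\operatorname{ch}(k)\ne2$ one finds $\gamma=2\tau-\tfrac{55}{128}\varepsilon^7$ in the original coordinates; the uniform statement arises only after the substitution $\varepsilon'=\varepsilon/2$. So your claim that ``$\operatorname{ch}(k)=2$ must be used, otherwise the system is inconsistent'' is wrong, and your identification $\varepsilon=\beta_1/2$ is meaningless in characteristic~$2$ (where in fact $\beta_1=0$).

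\textbf{Step 3 misreads the constraint and its depth.} Since $\rmo(g^2/f)=2b-a=32$, there are no coefficients at $t^{28},\dots,t^{31}$ to kill; the only gap of $H$ below the conductor in the relevant range is $34$ and $39$. More importantly, ``$g^2/f\in A$'' is far too weak: what is used is $g^2\in (f^2,fg)$, i.e.\ $g^2=f^2\varphi+fg\psi$ with $\varphi,\psi\in A$, and the paper compares coefficients of this equation through $t^{60}$ (equivalently, $g^2/f$ through order~$40$). The relations $\varepsilon\ne\alpha$, $3\varepsilon=2\alpha$, $\beta=\alpha^2-2\varepsilon^2$ come out at $t^{52}$--$t^{54}$, and the crucial determination of $\gamma$ (whence the $-55$ and the $\varepsilon^7$) only appears at $t^{58}$--$t^{59}$; stopping at order~$38$ will not see it. The paper also derives these first three relations independently and more conceptually via the blowup $B=A^I$: one shows $v(B)=\langle5,6\rangle$ (Proposition~\ref{prop2}, using that $B$ is Gorenstein so $v(B)$ must be symmetric) and then reads off the constraints from $\xi^2\in\fkm B$ and $7,8,9\notin v(B)$ (Proposition~\ref{lemma2}). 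You omit this ingredient entirely; while the table-$(\#)$ computation alone can in principle recover the same relations, you should be aware that this is where $\varepsilon\ne\alpha$ (hence $\varepsilon\ne0$ after reparametrization) actually comes from, not from ``$b-a=6\notin H$''.
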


Let us begin with the following.

\begin{lem}\label{2.1}
Let $\alpha, \beta, \gamma, \delta, \varepsilon, \tau \in k$ and assume that $\varepsilon \ne \alpha, \beta \ne 0$, and $\beta \ne \varepsilon (\alpha - \varepsilon)$. 
Let $f=t^{20} +\alpha t^{21}+\beta t^{22} +\gamma t^{27}+\delta t^{33}$ and $g= t^{26} +\varepsilon t^{27} + \tau t^{33}$. We set $J=(f,g)$. Then the following assertions hold true.
\begin{enumerate}[$(1)$]
\item $\fkc =t^{40}V  \subseteq J$ and $A/J \cong J/(f)$ as an $A$-module.
\item $J$ is an Ulrich ideal of $A$ if and only if $g^2 \in fJ$.
\end{enumerate}
\end{lem}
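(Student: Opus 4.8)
The plan is to verify the two Ulrich conditions for $J=(f,g)$ directly, reusing the bookkeeping already carried out in the proof of Proposition~\ref{prop1}(2). First I would compute the value semigroup $L=v(J)$ far enough to force $\fkc=t^{40}V\subseteq J$. Since $20,26\in L$ we immediately get $40,46,52,\dots$ and, combining $t^{20}\cdot t^{26},\,t^{20}\cdot t^{20},\,t^{26}\cdot t^{26}$ with the monomials $t^{5j}$, one checks that $40,41,42,43\in L$. The missing value $44$ comes from looking at $t^{24}f$ or at a suitable combination of $t^{4}g$ and $t^{18}f$: the leading terms are of order $44$, the correction terms of order $\ge 45$ already lie in $t^{40}V$, so $44\in L$ as well. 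Hence $t^{40}V\subseteq J$. This is where the hypotheses $\varepsilon\ne\alpha$, $\beta\ne0$, $\beta\ne\varepsilon(\alpha-\varepsilon)$ should enter: they are exactly what is needed so that the order-$44$ (and order-$33$, below) terms do not accidentally cancel, which is the one genuinely fiddly point of the argument.

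Next, for assertion~(1), with $\fkc\subseteq J$ in hand I would reduce $f$ and $g$ modulo $t^{40}V$ and observe that the relations $f\equiv 0$, $g\equiv 0\ (\mathrm{mod}\ J)$ together with $t^{40}V\subseteq J$ span $A/J$ as a $k$-vector space by the images of a bounded monomial set; a count of which monomials survive gives $\ell_A(A/J)\le 10$, hence $\ell_A(J/(f))=\ell_A(A/(f))-\ell_A(A/J)=20-\ell_A(A/J)\ge 10$. The surjection $\varphi\colon A/J\to J/(f)$, $\varphi(1\bmod J)=g\bmod J$ is then forced to be an isomorphism by length comparison, giving $A/J\cong J/(f)$ as $A$-modules and in particular that $J/(f)$ is $A/J$-free of rank one.

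For assertion~(2): by Definition~\ref{def} and the discussion following it, $J$ is Ulrich iff $J^2=fJ$ and $J/J^2$ (equivalently $J/(f)$) is $A/J$-free; the second condition is already granted by~(1), independently of whether $g^2\in fJ$. So it remains to see $J^2=fJ\iff g^2\in fJ$. Since $J=(f,g)$ we have $J^2=(f^2,fg,g^2)$ and $fJ=(f^2,fg)$, so $J^2=fJ$ is literally the statement $g^2\in fJ$. This is the cheap direction and needs only the algebra of ideals; the content of the lemma is really in (1).

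The main obstacle I expect is the value-semigroup computation showing $t^{40}V\subseteq J$, specifically pinning down $44\in L$ (and the auxiliary element $33+$ something that was already used in Proposition~\ref{prop1}(2)) without the correction terms conspiring to vanish. Handling this cleanly is precisely what the three inequalities $\varepsilon\ne\alpha$, $\beta\ne0$, $\beta\ne\varepsilon(\alpha-\varepsilon)$ are designed for, so the proof should proceed by isolating the relevant leading coefficients, noting each is a product/combination of these nonzero quantities, and concluding. Everything after $\fkc\subseteq J$ is a length count and a formal manipulation of generators of the form already seen in the proofs of Theorem~\ref{thm1} and Proposition~\ref{569}.
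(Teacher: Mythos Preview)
Your overall architecture matches the paper's: show $\fkc=t^{40}V\subseteq J$ by a value-set computation, then a length count forces the surjection $A/J\to J/(f)$ to be an isomorphism, and part~(2) is the formal identity $J^2=(f^2,fg,g^2)$, $fJ=(f^2,fg)$. That part is fine.

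The gap is in your route to $\fkc\subseteq J$. Your concrete suggestions do not work in $A=k[[t^5,t^{11}]]$: the multipliers $t^{24}$, $t^{4}$, $t^{18}$ you propose for reaching order $44$ are not elements of $A$, since $4,18,24\notin H=\langle 5,11\rangle$. Likewise, $43$ is \emph{not} obtained from simple products: $43\notin (20+H)\cup(26+H)$, so it cannot appear as the order of any $t^hf$ or $t^hg$ with $h\in H$. A direct attack on $40$--$44$ therefore stalls at $43$ and $44$.

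The paper's computation is more indirect and this indirection is essential. One first forms
\[
t^{10}g - t^{16}f \;=\;(\varepsilon-\alpha)t^{37}-\beta t^{38}+\cdots,\qquad
(\varepsilon-\alpha)t^{11}g-(t^{10}g-t^{16}f)\;=\;\bigl(\varepsilon(\varepsilon-\alpha)+\beta\bigr)t^{38}+\cdots,
\]
and it is exactly here that the hypothesis $\beta\ne\varepsilon(\alpha-\varepsilon)$ forces $38\in L$. From $38$ one gets $45,\dots,49\in L$, hence $t^{45}V\subseteq J$; only then can one descend to $t^{43}\in J$ (same nonzero coefficient), then $t^{42}\in J$ (here $\varepsilon\ne\alpha$ is used), then $t^{41},t^{40}\in J$, and finally $t^{44}\in J$ via $t^{22}f-t^{16}g$ (here $\beta\ne 0$ is used). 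So each of the three hypotheses enters at a specific step, and the order ``up to $45$, then back down to $40$--$44$'' is what makes the argument go through. Your sketch anticipates that the three inequalities control nonvanishing leading coefficients, which is correct in spirit, but the particular combinations you name are not available; you should replace them with the ones above. After $\fkc\subseteq J$ (and, as the paper notes, $t^{35},t^{36},t^{37},t^{38}\in J$, which drops out of the same relations), the length count to $\ell_A(A/J)\le 10$ and the rest proceed exactly as you describe.
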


\begin{proof}
We set $L = v(I)$. First of all, we will show that $45, 46, 47, 48, 49 \in L$. It suffices to see $38 \in L$, since $45,46,47 \in L$. Consider
\begin{eqnarray*}
t^{16}f&=&t^{36} +\alpha t^{37}+\beta t^{38} +\gamma t^{43}+\delta t^{49} \\ 
t^{10}g&=& t^{36} +\varepsilon t^{37} + \tau t^{43}\\
t^{11}g&=&t^{37} +\varepsilon t^{38} + \tau t^{44}
\vspace{-2em}
\end{eqnarray*}
and we have 
$$
(*) \ \ \ \ \ t^{10}g -t^{16}f= \left(\varepsilon -\alpha \right)t^{37}-\beta t^{38}+(\tau -\gamma)t^{43} - \delta t^{49} \in I
$$
so that 
$$
(**) \ \ \ \ \ (\varepsilon-\alpha){\cdot} t^{11}g -(t^{10}g -t^{16}f)=(\varepsilon(\varepsilon -\alpha)+\beta)t^{38} + (\gamma - \tau)t^{43} + (\varepsilon - \alpha)t^{44} + \delta t^{49} \in I.$$
Consequently, $38 \in L$, since $\varepsilon(\varepsilon -\alpha)+\beta \ne 0$. Hence, $45,46, 47, 48, 49 \in L$, and $t^{45}V \subseteq I$. We now notice that
\begin{eqnarray*}
\hspace{1.5em}
(***1) \ \ \ \ \ t^{20}f&=&t^{40} +\alpha t^{41}+\beta t^{42} +\gamma t^{47}+\delta t^{53}\\
(***2) \ \ \ \ \ t^{15}g&=&t^{41} +\varepsilon t^{42} + \tau t^{48}\\
 \vspace{-2cm}
\end{eqnarray*}
Then, since
$$
(\varepsilon(\varepsilon-\alpha)+\beta)t^{43} + (\gamma - \tau)t^{48} + (\varepsilon - \alpha)t^{49} + \delta t^{54} \in I
$$
by $(**)$, 
we have $(\varepsilon(\varepsilon-\alpha)+\beta)t^{43} \in I$, so that $t^{43} \in I$. Consequently, since 
$$
t^5(t^{10}g -t^{16}f)= \left(\varepsilon -\alpha \right)t^{42}-\beta t^{43}+(\tau -\gamma)t^{48} - \delta t^{54} \in I
$$
by $(*)$, 
we get 
$$
(\varepsilon - \alpha)t^{42} \in I,
$$
whence $t^{42} \in I$. Therefore $t^{41}, t^{40} \in I$ by $(***2)$ and $(***1)$. Similarly, considering $t^{22}f -t^{16}g$, we have 
$$(\varepsilon -\alpha) t^{43} - \beta t^{44} \in I,$$ so that $t^{44} \in I$. Therefore, we have $\fkc = t^{40}V \subseteq I$. On the other hand, $t^{38} \in I$ by $(**)$, whence we get $t^{37} \in I$, considering $t^{11}g$. Hence $t^{36}, t^{35} \in I$, which come from the fact $t^{16}g, t^{15}f \in I$. Therefore, $$t^{35}, t^{36}, t^{37}, t^{38} \in I,$$ which guarantees that the $k$-space $A/I$ is spanned by the images of the monomials
$$1, \ t^5, \ t^{10},\  t^{11}, \ t^{15}, \ t^{16},\  t^{20},\  t^{21}, \ t^{22}, \ t^{25}, \ t^{26},\ t^{27}, \ t^{30},\ t^{31},\ t^{32}, \  t^{33}$$
$$
 \ytableausetup{mathmode, boxsize=2em}
 \begin{ytableau}
 *(lightgray) 0 & 1 &  2 & 3 & 4\\
 *(lightgray) 5 & 6 & 7 & 8 & 9 \\
 *(lightgray) 10 &  *(lightgray) 11 & 12 & 13 & 14 \\ 
 *(lightgray) 15 &  *(lightgray) 16 & 17 & 18 & 19\\ 
 *(lightgray) 20   &   *(lightgray) 21 & *(lightgray) 22 & 23 & 24\\
 *(lightgray) 25   &  *(lightgray) 26 &  *(lightgray) 27 & 28 & 29\\ 
 *(lightgray) 30   &   *(lightgray) 31 & *(lightgray) 32 &  *(lightgray) 33 & 34 \\ 
\end{ytableau} 
$$
and among them we have the relations 
$$
f\equiv 0,\  t^{15}f \equiv 0,\  t^{10}f \equiv 0,\  g \equiv 0, \ t^5g \equiv 0~\mod~I,
$$
so that $A/I$ is actually spanned by the images of the following eleven monomials
$$1, \ t^5, \ t^{10},\  t^{11}, \ t^{15}, \ t^{16},\  t^{21}, \ t^{22}, \ t^{27}, t^{32},\  t^{33}.$$
We however one more relation $$t^5g-t^{11}f \equiv (\varepsilon -\alpha )t^{32} -\beta t^{33} \equiv 0~\mod~I,$$
so that $\ell_A(A/I) \le 10$. Therefore, because $\ell_A(A/(f))=20$, the epimorphim $$\varphi : A/I \to I/(f), \ \ \varphi(1~\mod~I) = g~\mod~I$$  of $A$-modules is an isomorphism, and thus, $A/I \cong I/(f)$ as an $A$-module.

The second assertion is clear, since $(f)$ is a reduction of $I$ (notice that $IV=fV$).
\end{proof}

Let $I$ be an Ulrich ideal of $A=k[[t^5,t^{11}]]$ and suppose that $(a,b)=(20, 26)$. Thanks to Proposition \ref{prop1} (2), we may assume that 
$$
f=t^{20} +\alpha t^{21}+\beta t^{22} +\gamma t^{27}+\delta t^{33}\ \ \text{and}\ \ g= t^{26} +\varepsilon t^{27} + \tau t^{33}
$$
for some $\alpha, \beta, \gamma, \delta, \varepsilon, \tau \in k$, where we shall fix the present notation which is different from the notation used in Proposition \ref{prop1} (2), in order to avoid possible confusion about the indices. We set $\xi = \frac{g}{f}$, $B=A^I$, and $H_1 = v(B)$. Then $\rmo(\xi)=6$, $B=k[[t^5,t^{11},\xi]]$, and $B=f^{-1}I = A+A\xi$. The numerical semigroup $H_1$ is symmetric, since $B$ is a Gorenstein ring. Notice that $\xi \not\in \m B$ but $\xi^2 \in \m B$ (\cite[Lemma 3.2]{e=4}), where $\m$ denotes the maximal ideal of $A$. 

We then have the following.

\begin{prop}\label{prop2}
$H_1= \left<5,6 \right>$, $B = k[[t^5, \xi]]$, and $t^6 \not\in B$. In particular, $B$ is not the semigroup ring for any numerical semigroup.
\end{prop}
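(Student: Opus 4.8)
The plan is to extract $H_1 = v(B)$ directly from the description $B = A + A\xi$ with $\xi = g/f$, $\rmo(\xi) = 6$, together with the facts that $B$ is a one-dimensional Gorenstein core of $V$ (so $H_1$ is a symmetric numerical semigroup) and that $B \supseteq A = k[[t^5,t^{11}]]$. First I would note that $5 \in H_1$ and $6 = \rmo(\xi) \in H_1$, so $\langle 5,6\rangle \subseteq H_1$. The semigroup $\langle 5,6\rangle$ is symmetric with conductor $20$ and Frobenius number $19$; its gaps are $\{1,2,3,4,7,8,9,13,14,19\}$. To prove equality $H_1 = \langle 5,6\rangle$ it then suffices to show $H_1$ omits all these ten values, and by symmetry of $H_1$ it is enough to rule out, say, the "large" gaps and use the symmetry $x \in H_1 \iff c_1 - 1 - x \notin H_1$ once we pin down the conductor $c_1 = \rmc(H_1)$. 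Alternatively, and more cleanly, one computes $\ell_A(B/A)$: since $B = A + A\xi = A + A\xi^2 + \cdots$ is generated over $A$ by powers of $\xi$, and we know $\xi \notin \m B$, $\xi^2 \in \m B$, one gets a handle on which residues mod $A$ actually appear. The key elementary input is that $IV = fV = t^{20}V$, hence $\fkc_B = B : V$ and $\rmc(H_1)$ are controlled, and in fact the blow-up being $f^{-1}I$ with $I$ Ulrich of type data $a = 20$ forces $\ell_A(A/B) $-type bookkeeping that was essentially already done inside the proof of Lemma~\ref{2.1}.

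Concretely, I would proceed as follows. Step 1: show $t^6 \notin B$. Suppose $t^6 \in B = A + A\xi$; write $t^6 = u + v\xi$ with $u, v \in A$. Comparing orders: every element of $A$ has order in $H = \langle 5,11\rangle$, which excludes $6$; and $\rmo(v\xi) = \rmo(v) + 6$ with $\rmo(v) \in H \cup \{\infty\}$, so $\rmo(v\xi) \in \{6, 11, 16, 17, \ldots\}$ — order $6$ is possible only if $v$ is a unit. Then $u = t^6 - v\xi$ would force $\rmo(u) \ge 7$ while simultaneously the order-$6$ terms must cancel, i.e. $v(0) \cdot t^6$ is the leading term of $v\xi$ and there is nothing in $u \in A$ to cancel it — but $t^6 \ne v\xi$ exactly, so we must have $u$ absorbing the rest; careful comparison of the next several coefficients (using the explicit expansion of $\xi = g/f$ from Proposition~\ref{prop1}(2)'s normal form, as computed in the $\langle5,6,9\rangle$ and later arguments) yields a contradiction with $\rmo(u) \in H$. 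This is the same flavour of "normal form" bookkeeping used throughout the paper. Step 2: having $t^6 \notin B$, deduce that $B$ cannot be $k[[H_1']]$ for any numerical semigroup $H_1'$: for if $B = k[[H_1']]$ then $B$ would contain $t^m$ for every $m \in H_1' = v(B)$, and in particular $t^6 \in B$ since $6 = \rmo(\xi) \in v(B)$ — contradiction. Step 3: identify $H_1$. We have $\langle 5,6\rangle \subseteq H_1$. Since $B = k[[t^5, t^{11}, \xi]]$ and $t^{11} = t^5 \cdot t^6 / t^{?}$... rather, simply: $\langle 5,6\rangle$ already has conductor $20$, and since $f \in I$ with $\rmo(f) = 20$ gives $t^{20}V \subseteq B$ (indeed $\fkc = A:V = t^{20}V \subseteq I \subseteq B$ — wait, $\fkc$ of $A$ is $t^{c}V$ with $c = \rmc(H) = 40$; the relevant containment is $IV = t^{20}V$ and $t^{20}V \subseteq A^I$ because $A^I = f^{-1}I \supseteq f^{-1}(IV \cap \text{stuff})$...). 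The clean statement: $c_1 := \rmc(H_1) \le 20$ because $\langle 5,6\rangle \subseteq H_1$ and $\rmc(\langle5,6\rangle) = 20$; and $c_1 \ge 20$ would follow from showing $19 \notin H_1$, which by symmetry of $H_1$ is equivalent to... I would instead count: $\ell_A(A/\fkc_B)$ or use $e(A) = 5 = e(B)$ (multiplicity is preserved since $B \supseteq A$ and both have the same fraction field with $t^5$ of minimal order) to get $|\N \setminus H_1| = |\N \setminus \langle5,6\rangle|$ via the genus, forcing $H_1 = \langle 5,6\rangle$.

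**Main obstacle.** The genuinely non-routine part is Step 1, $t^6 \notin B$: it requires an honest computation with the power-series expansion of $\xi = g/f$ (where $f = t^{20} + \cdots$, $g = t^{26} + \cdots$ as in Proposition~\ref{prop1}(2)), checking that no $A$-linear combination $u + v\xi$ can equal $t^6$. The subtlety is that $v$ must be a unit (to get order $6$) but then its higher-order coefficients are constrained, and one must track enough terms of $\xi$ to see that the resulting constraints on $u$ are incompatible with $\rmo(u) \in \langle5,11\rangle$ — in particular the coefficients of $t^7, t^8, t^9$ in $u$ must vanish (those are gaps of $H$), which over-determines the unit $v$ and fails. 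Once $t^6 \notin B$ is in hand, the remaining steps (the semigroup identification via genus/multiplicity, and the "not a semigroup ring" conclusion) are short. Alternatively one may shortcut the whole identification of $H_1$ by invoking that $B$ is Gorenstein of multiplicity $5$, embedding dimension $2$ (as $B = k[[t^5,\xi]]$ with $\rmo(\xi)=6$, $\gcd(5,6)=1$), hence $B \cong k[[x,y]]/(x^6 - y^5)$-type and $v(B) = \langle 5,6\rangle$ automatically; then $t^6 \notin B$ is what distinguishes this concrete $B$ inside $V$ from the "monomial" model $k[[t^5,t^6]]$.
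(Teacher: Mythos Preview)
Your plan has the right skeleton, but Step~1 as written does not close, and the proposed shortcut is circular.

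\textbf{Step 1.} Writing $t^6 = u + v\xi$ with $u,v \in A$ and comparing the $t^7$-coefficient does not ``over-determine $v$''; rather (since $v_1 = u_7 = 0$, as $1,7\notin H$) it forces $c_7 = 0$, where $\xi = t^6 + c_7 t^7 + \cdots$. To reach a contradiction you would need $c_7 = \varepsilon - \alpha \ne 0$, but that is precisely Proposition~\ref{lemma2}, whose proof \emph{uses} $7,8,9 \notin H_1$, i.e.\ uses the present result. The paper avoids this circularity by a different mechanism: it first shows $7,8,9 \notin H_1$ by writing any non-unit $\eta \in B$ as $t^5\varphi + t^{11}\psi + \xi\rho$ with $\varphi,\psi,\rho \in B$ and observing that $\rmo(\eta) \ge 7$ forces $\varphi, \rho \in \m_B$, whence $\rmo(\eta) \ge 10$. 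Then it invokes the Ulrich-specific fact $\xi^2 \in \m B$: writing $\xi^2 = t^5\varphi + t^{11}\psi$ with $\varphi,\psi \in B$, if $t^6 \in B$ one has $\xi^2 = t^5(\varphi + t^6\psi)$, exhibiting an element of $B$ of order $7$ and contradicting $7 \notin H_1$. No information about the coefficients $\alpha,\varepsilon,\ldots$ is required.

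\textbf{Step 3.} Your genus-counting idea is correct and actually slicker than the paper's route, but you have not stated it cleanly; multiplicity is beside the point. The right count is
\[
\ell_A(B/A) \;=\; \ell_A\bigl(f^{-1}I/A\bigr) \;=\; \ell_A\bigl(I/(f)\bigr) \;=\; \ell_A(A/I) \;=\; a/2 \;=\; 10,
\]
using Lemma~\ref{key}(3) and $I/(f)\cong A/I$. Hence the genus of $H_1$ is $\ell_A(V/A) - 10 = 20 - 10 = 10$, equal to the genus of $\langle 5,6\rangle$; since $\langle 5,6\rangle \subseteq H_1$, equality follows. The paper instead performs a case-by-case elimination of the symmetric numerical semigroups strictly between $\langle 5,6\rangle$ and $\langle 5,6,7,8,9\rangle$, via repeated socle computations in $V/C$.

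\textbf{The shortcut.} Declaring ``$B = k[[t^5,\xi]]$ has embedding dimension~$2$'' is circular: a priori $B = k[[t^5, t^{11}, \xi]]$, and $B = k[[t^5,\xi]]$ is part of what is being proved. The paper deduces it only after $H_1 = \langle 5,6\rangle$ is in hand, since then $v(k[[t^5,\xi]]) \supseteq \langle 5,6\rangle = v(B)$ forces $k[[t^5,\xi]] = B$.
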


\begin{proof}
Since $\m_BV=t^5V$, we have $\left<5,6 \right> \subseteq H_1 \subseteq \left<5,6,7,8,9 \right>$, and $\m_B^2 =t^5 \m_B$, where $\m_B$ denotes the maximal ideal of $B$. Hence, $7,8,9 \not\in H_1$. In fact, let $q \in \{7,8,9\}$ and assume $q \in H_1$. Choose an element $\eta \in B$ so that $\rmo(\eta)=q$. We then have $\eta =t^5\varphi + t^{11}\psi + \xi \rho$ for some $\varphi, \psi, \rho \in B$, where $\varphi, \rho \in \m_B$ because $\rmo(\eta) =q \ge 7$. Hence, $\rmo(t^5\varphi) \ge 10$ and $\rmo(\xi \rho) \ge 11$, which forces $q=\rmo(\eta) \ge 10$. This is absurd. Thus, $7,8,9 \not\in H_1$.

Since $\xi^2 \in \m B$, we have $$\xi^2 = t^5\varphi+ t^6{\cdot}t^5\psi=t^5(\varphi + t^6\psi)$$ with $\varphi, \psi \in B$. Hence, if $t^6 \in B$, then $\rmo(\varphi + t^6\psi)= 7$, so that $7 \in H_1$, which is impossible. Hence $t^6 \not\in B$, and $B$ is not the semigroup ring for any numerical semigroup.

We set $C = k[[t^5, \xi]]$. Then $C \subseteq B$ and $\left<5, 6 \right> \subseteq v(C)$. Therefore, to see $C=B$, it suffices to show $H_1= \left<5,6 \right>$.

$\left<5,6\right>$:
\vspace{0.5em}
$$
 \ytableausetup{mathmode, boxsize=2em}
 \begin{ytableau}
 *(lightgray) 0 & 1 &  2 & 3 & 4\\
 *(lightgray) 5 & *(lightgray)6 & 7 & 8 & 9 \\
 *(lightgray) 10 &  *(lightgray) 11 & *(lightgray)12 & 13 & 14 \\ 
 *(lightgray) 15 &  *(lightgray) 16 & *(lightgray)17 &*(lightgray) 18 & 19\\ 
 *(lightgray) 20   &   *(lightgray) 21 & *(lightgray) 22 & *(lightgray)23 & *(lightgray)24\\
\end{ytableau} 
$$
\vspace{-0.5em}

Assume that $H_1 \supsetneq \left<5,6 \right>$. Then, $19 \in H_1$, since the $C$-submodule $V/C$ of $\rmQ(C)/C$ contains a unique socle generated by the image of $t^{19}$ (here $\rmQ(C)$ denotes the quotient field of $C$) . We set $D = k[[t^5,t^6,t^{19}]]$.

$\left<5,6,19 \right>$:
\vspace{0.5em}
$$
 \ytableausetup{mathmode, boxsize=2em}
 \begin{ytableau}
 *(lightgray) 0 & 1 &  2 & 3 & 4\\
 *(lightgray) 5 & *(lightgray)6 & 7 & 8 & 9 \\
 *(lightgray) 10 &  *(lightgray) 11 & *(lightgray)12 & 13 & 14 \\ 
 *(lightgray) 15 &  *(lightgray) 16 & *(lightgray)17 &*(lightgray) 18 & *(lightgray) 19\\ 
\end{ytableau} 
$$
\vspace{-0.5em}

\noindent
Then, because $\left<5,6,19 \right>$ is not symmetric, we have $H_1 \supsetneq \left<5,6,19 \right>$, whence $13 \in H_1$ or $14 \in H_1$, because the socle of the $D$-module $V/D$ is spanned by the images of $t^{13}$ and $t^{14}$. We claim $13 \not\in H_1$. In fact, suppose $13 \in H_1$. Then,  $H_1 \supsetneq \left<5,6,13 \right>$ since $\left<5,6,13 \right>$ is not symmetric, so that $14 \in H_1$, because the socle of the $k[[t^5,t^6,t^{13}]]$-module $V/k[[t^5,t^6,t^{13}]]$ is spanned by the images of $t^7$ and $t^{14}$ but $7 \notin H_1$.

$\left<5,6,13 \right>$:
\vspace{0.5em}
$$
 \ytableausetup{mathmode, boxsize=2em}
 \begin{ytableau}
 *(lightgray) 0 & 1 &  2 & 3 & 4\\
 *(lightgray) 5 & *(lightgray)6 & 7 & 8 & 9 \\
 *(lightgray) 10 &  *(lightgray) 11 & *(lightgray)12 & *(lightgray)13 & 14 \\ 
 *(lightgray) 15 &  *(lightgray) 16 & *(lightgray)17 &*(lightgray) 18 & *(lightgray) 19\\ 
\end{ytableau} 
$$
\vspace{-0.5em}

\noindent
Therefore, $H_1 \supseteq \left<5,6,13, 14 \right>$, whence $H_1 = \left<5,6,13, 14 \right>$, because $7, 8, 9 \not\in H_1$.

$\left<5,6,13, 14 \right>$:
\vspace{0.5em}
$$
 \ytableausetup{mathmode, boxsize=2em}
 \begin{ytableau}
 *(lightgray) 0 & 1 &  2 & 3 & 4\\
 *(lightgray) 5 & *(lightgray)6 & 7 & 8 & 9 \\
 *(lightgray) 10 &  *(lightgray) 11 & *(lightgray)12 & *(lightgray)13 & *(lightgray)14 \\ 
 *(lightgray) 15 &  *(lightgray) 16 & *(lightgray)17 &*(lightgray) 18 & *(lightgray) 19\\ 
\end{ytableau} 
$$
\vspace{-0.5em}

\noindent
This is impossible, since $\left<5,6,13,14 \right>$ is not symmetric. Thus, $13 \not\in H_1$, so that $14 \in H_1$ and hence $H_1 = \left<5,6,14 \right>$, because $7,8,9,13 \not\in H_1$.

$\left<5,6,14 \right>$:
\vspace{0.5em}
$$
 \ytableausetup{mathmode, boxsize=2em}
 \begin{ytableau}
 *(lightgray) 0 & 1 &  2 & 3 & 4\\
 *(lightgray) 5 & *(lightgray)6 & 7 & 8 & 9 \\
 *(lightgray) 10 &  *(lightgray) 11 & *(lightgray)12 & 13 & *(lightgray)14 \\ 
 *(lightgray) 15 &  *(lightgray) 16 & *(lightgray)17 &*(lightgray) 18 & *(lightgray) 19\\ 
\end{ytableau} 
$$
\vspace{-0.5em}

\noindent
This is, however, still impossible, since $\left<5,6, 14 \right>$ is not symmetric. Thus, $H_1 =\left<5,6 \right>$ as claimed. 
\end{proof}

The following is the key in our argument.

\begin{prop}\label{lemma2} We have $\varepsilon \ne \alpha$, $3 \varepsilon = 2\alpha$, and $\beta = \alpha^2-2\varepsilon^2$. 
\end{prop}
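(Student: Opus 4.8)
The plan is to extract the three identities $\varepsilon\neq\alpha$, $3\varepsilon=2\alpha$, and $\beta=\alpha^2-2\varepsilon^2$ by computing the element $\xi=g/f$ of $B=A^I$ to enough $t$-adic precision and then using the structural fact, established in Proposition \ref{prop2}, that $B=k[[t^5,\xi]]$ with $v(B)=\langle 5,6\rangle$. Concretely, I would first write $\xi=\frac{g}{f}=t^6\cdot\frac{1+\varepsilon t+\tau t^{13}}{1+\alpha t+\beta t^2+\gamma t^7+\delta t^{13}}$ and expand the geometric series to obtain
$$
\xi = t^6+(\varepsilon-\alpha)t^7+\bigl(\alpha^2-\alpha\varepsilon-\beta\bigr)t^8+\cdots,
$$
keeping track of the coefficients through, say, degree $t^{11}$ or $t^{12}$; this is the routine calculation I will not grind through here. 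Since $\rmo(\xi)=6$ we automatically have the leading term right, and the point is that all intermediate orders $7,8,9$ are forbidden from $v(B)=\langle5,6\rangle$, while order $10$ must come from $(t^5)^2$ and order $11$ from $t^5\xi$ after subtracting off $t^5\cdot t^5$ suitably, etc.

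The first identity $\varepsilon\neq\alpha$ is the easiest: if $\varepsilon=\alpha$ then $\rmo(\xi-t^6)\geq 8$, and chasing one more step one finds $\rmo$ of a suitable $B$-combination lands in $\{7,8,9\}$ — more precisely one shows $7\in v(B)$ or $8\in v(B)$, contradicting Proposition \ref{prop2}. (Alternatively, one can invoke $\varepsilon\neq\alpha$ directly: it is exactly the hypothesis that makes $\xi^2\in\m B$ but $\xi\notin\m B$ meaningful, cf.\ \cite[Lemma 3.2]{e=4}; I would phrase it cleanly via the semigroup obstruction.) For the remaining two identities, the mechanism is: $\xi^2\in\m B=(t^5,t^6)B$, and more usefully $\xi^2-t^5\cdot(\text{stuff})$ and the element $t^5\xi$ together with $t^{10}=(t^5)^2$ must, when combined, never produce an element of order lying in the gap set $\{7,8,9\}\cup\{12\}\cup\cdots$ of $\langle 5,6\rangle$. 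Writing $\xi=t^6+c_7t^7+c_8t^8+c_9t^9+c_{10}t^{10}+c_{11}t^{11}+\cdots$ with $c_7=\varepsilon-\alpha$, the constraint that $c_7,c_8,c_9$ be "cancellable" against $t^{12}=t^6\cdot t^6$ and $t^{11}=t^5\cdot\xi-\cdots$ inside $B$ forces linear relations on the $c_i$; translating these back through the explicit formulas for $c_8,c_9$ in terms of $\alpha,\beta,\varepsilon,\tau$ yields $3\varepsilon=2\alpha$ and then $\beta=\alpha^2-2\varepsilon^2$. I expect the cleanest route is: note $t^{12}\in B$ forces $\xi-t^6$ to have order $\geq 11$ after subtracting the appropriate $k$-multiple of $t^6\xi$ — no, more accurately, since $7,8,9\notin v(B)$ while $12\in v(B)$, the expansion $\xi=t^6+c_7t^7+\cdots$ must have $c_7=c_8=c_9=0$ **unless** those monomials are killed by the only available relation, which at these low degrees is just membership in $t^5B+t^{11}B+\xi\cdot\m B$; pushing this through pins down $c_7,c_8$ up to the ambiguity of a $t^5$-multiple and a $t^6$-multiple of lower-order elements of $B$, and the bookkeeping gives the stated equations.

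The main obstacle is the bookkeeping in the second part: one must be careful that $B=A+A\xi$ is only a rank-$2$ $A$-module, so "order $q\in v(B)$" does not mean a monomial $t^q$ sits in $B$; rather every element of $B$ is of the form $\varphi+\psi\xi$ with $\varphi,\psi\in A=k[[t^5,t^{11}]]$, and I must compute orders of such combinations modulo the conductor. The safe way is to work in the residue module $V/\fkc$ (with $\fkc=t^{40}V$, which is contained in $I$ by Lemma \ref{key}(4), hence in $B$), list a $k$-basis of $B/\fkc$ built from $\{t^{5i}\}$ and $\{t^{5i}\xi\}$, expand everything to degree $39$, and read off that the coefficients of $t^7,t^8,t^9$ (and the single gap $t^{12}$, $t^{13}$) in $\xi$ — equivalently in $g-f\cdot(\text{lift of }\xi)$ — must vanish. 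Each such vanishing is a polynomial equation in $\alpha,\beta,\gamma,\delta,\varepsilon,\tau$; the degree-$7$ one gives nothing new ($c_7=\varepsilon-\alpha$ is allowed to be nonzero since $t^7\notin$ but a $t^5$-combination can absorb it — here I must double-check, and in fact the correct statement is that $c_7$ is free but $c_8,c_9$ are then determined), and the degree-$8$ and degree-$9$ equations, after substituting the geometric-series formulas, collapse to $3\varepsilon=2\alpha$ and $\beta=\alpha^2-2\varepsilon^2$ respectively. I would present the argument by first recording the expansion of $\xi$ as a displayed formula, then stating "since $v(B)=\langle5,6\rangle$ the coefficients of $t^8$ and $t^9$ in the normal form of $\xi$ must vanish," and finally doing the two short substitutions.
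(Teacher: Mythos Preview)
Your overall strategy---expand $\xi=g/f$ and exploit $v(B)=\langle 5,6\rangle$ from Proposition~\ref{prop2}---is exactly the paper's, but the execution plan has a genuine gap. The repeated suggestion that ``the coefficients of $t^7,t^8,t^9$ in $\xi$ must vanish'' is simply false: $c_7=\varepsilon-\alpha$ is \emph{nonzero}, and that nonvanishing is precisely the first identity you are trying to prove. Nothing about $\xi\in B$ alone constrains $c_7,c_8,c_9$, since $\rmo(\xi)=6\in\langle5,6\rangle$ regardless. You also slip twice on basic facts: $12=6+6$ lies in $\langle5,6\rangle$ (it is not a gap), and $\m B=(t^5,t^{11})B$, not $(t^5,t^6)B$, because Proposition~\ref{prop2} tells you $t^6\notin B$. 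Your instinct about $\varepsilon\neq\alpha$ is also inverted: if $c_7=0$ the problem is not that some forbidden order appears, but that $B$ would have \emph{no} element of order $12$ (the only candidate being $t^{11}-t^5\xi=-c_7t^{12}-\cdots$), contradicting $12\in v(B)$.

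The missing mechanism is the one you mention but immediately abandon: $\xi^2\in\m B$. The paper writes $\xi^2=t^5\varphi+t^{11}\psi$ with $\varphi,\psi\in B$, reads off $\varphi_0=\cdots=\varphi_5=0$ and
\[
\varphi_7=1,\qquad \varphi_8=2(\varepsilon-\alpha),\qquad \varphi_9=\varepsilon^2-4\varepsilon\alpha+3\alpha^2-2\beta,
\]
and then observes that $\varphi-\varphi_6\xi\in B$ has vanishing $t^6$-coefficient, so its order must avoid the gaps $7,8,9$ of $\langle5,6\rangle$. Equating the $t^7,t^8,t^9$ coefficients of $\varphi$ and $\varphi_6\xi$ gives $1=\varphi_6 c_7$ (hence $\varepsilon\neq\alpha$), $2(\varepsilon-\alpha)=\varphi_6 c_8$, and a third relation; eliminating $\varphi_6=1/(\varepsilon-\alpha)$ yields $\beta=-2\varepsilon^2+3\varepsilon\alpha-\alpha^2$ and then $(\varepsilon-\alpha)^2(3\varepsilon-2\alpha)=0$. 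This is the concrete step your proposal never pins down: you must produce a \emph{second} element of $B$ of order in $\{6,7,8,9\}$ range (namely $\varphi$, or equivalently $\xi^2$ shifted down by $t^5$) against which to cancel $\xi$, and that element comes from the relation $\xi^2\in\m B$, not from $\xi$ itself.
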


\begin{proof}
Let $$\xi= t^6 + c_7t^7 + c_8t^8 + c_9t^9+\cdots$$ with $c_i \in k$. We then have $$c_7=\varepsilon - \alpha,\ \ c_8=\alpha^2-\varepsilon \alpha - \beta, \ \ c_9= 2\alpha\beta + \varepsilon \alpha^2 -\varepsilon \beta - \alpha^3,$$
since $g=f\xi$. Let us write $\xi^2 =t^5\varphi+t^{11}\psi$ with $\varphi, \psi \in B$ and let $$\varphi = \sum_{i=0}^\infty \varphi_it^i \ \ \text{and} \ \ \psi = \sum_{i=0}^\infty \psi_it^i$$ with $\varphi_i, \psi_i \in k$. Then, comparing the coefficients of $t^i$ ($5 \le i \le 14$) in both sides of the equation
$$(t^6 + c_7t^7 + c_8t^8 + c_9t^9+\cdots)^2 = t^5{\cdot}\sum_{i=0}^\infty \varphi_it^i+t^{11}{\cdot}\sum_{i=0}^\infty \psi_it^i,$$
we have $\varphi_i=0$ for all $ 0 \le i \le 5$ and
$$
\varphi_6+\psi_0=0, \ \ \varphi_7=1, \ \ \varphi_8=2(\varepsilon -\alpha),\ \ \varphi_9=\varepsilon^2 -4\varepsilon \alpha + 3\alpha^2 -2 \beta.
$$
Hence $$\varphi = \varphi_6t^6 + t^7 + 2(\varepsilon -\alpha)t^{8}+ (\varepsilon^2 -4\varepsilon \alpha + 3\alpha^2 -2 \beta)t^9 + \cdots.$$ 
We now compare $\varphi$ with
$$\varphi_6\xi = \varphi_6t^6+ \varphi_6(\varepsilon-\alpha)t^7+\varphi_6(\alpha^2-\varepsilon \alpha - \beta)t^8+\varphi_6(2\alpha\beta + \varepsilon \alpha^2 -\varepsilon \beta - \alpha^3)t^9 + \cdots$$
and considering the coefficients of $t^i$ ($7 \le i \le 9$) in the difference $\varphi - \varphi_6\xi$, we get 
$$
1=\varphi_6{\cdot}(\varepsilon - \alpha), 2(\varepsilon - \alpha) = \varphi_6(\alpha^2-\varepsilon \alpha -\beta), \varepsilon^2 -4\varepsilon \alpha + 3\alpha^2 -2 \beta =\varphi_6(2\alpha \beta + \varepsilon \alpha^2 - \varepsilon \beta -\alpha^3)
$$
because $7, 8,9 \not\in H_1=\left<5,6\right>$. The first two equalities imply that
$$\varepsilon \ne \alpha \ \ \text{and}\ \ \beta =-2\varepsilon^2+3\varepsilon \alpha - \alpha^2,$$
and because 
$$
\varepsilon^2 -4\varepsilon \alpha + 3\alpha^2 -2 \beta =\frac{1}{\varepsilon -\alpha}(2\alpha \beta + \varepsilon \alpha^2 - \varepsilon \beta -\alpha^3),
$$
we have
$$
3\varepsilon^3 - 8\varepsilon^2\alpha +7\varepsilon \alpha^2 -2\alpha^3=(\varepsilon - \alpha)^2(3\varepsilon -2\alpha)=0.
$$
Thus $$3\varepsilon=2\alpha\ \ \text{and}\ \ \beta =-2\varepsilon^2+3\varepsilon \alpha - \alpha^2=\alpha^2-2\varepsilon^2$$
as claimed. 
\end{proof}

\begin{thm}\label{2.4}
If $\operatorname{ch}(k)=2$, then $\varepsilon=0$, $\alpha \ne 0$, $\beta = \alpha^2$, and $\gamma = \alpha^7$. 
\end{thm}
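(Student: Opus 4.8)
The plan is to separate the four conclusions. The first three — $\varepsilon=0$, $\alpha\ne 0$, and $\beta=\alpha^2$ — should follow at once from Proposition~\ref{lemma2} by specializing to $\ch(k)=2$: since $2=0$ and $3=1$ in $k$, the identity $3\varepsilon=2\alpha$ reads $\varepsilon=0$, whence $\varepsilon\ne\alpha$ forces $\alpha\ne 0$, and $\beta=\alpha^2-2\varepsilon^2$ becomes $\beta=\alpha^2$. So the only genuine point is the last equality $\gamma=\alpha^7$.

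For this I would use the Ulrich hypothesis concretely, in the form $I^2=fI$ provided by Lemma~\ref{key}~(1): it gives $g^2\in I^2=fI\subseteq fA$, hence $h:=g^2/f$ is a genuine element of $A=k[[t^5,t^{11}]]$. Now feed in what the first paragraph already yields: $\varepsilon=0$ gives $g=t^{26}+\tau t^{33}$, so in characteristic $2$ one has $g^2=t^{52}+\tau^2t^{66}$. Solving $fh=g^2$ recursively for the coefficients of $h$, starting from $\rmo(h)=2b-a=32$, is a short computation — it collapses in characteristic $2$ because $t\mapsto t^2$ is a ring homomorphism and the relevant cross terms vanish — and, using also $\beta=\alpha^2$, it produces
\[
h=t^{32}+\alpha t^{33}+\alpha^3t^{35}+\alpha^4t^{36}+\alpha^6t^{38}+(\alpha^7+\gamma)\,t^{39}+\cdots,
\]
the dots being terms of order $\ge 40$. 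Since $39$ is the Frobenius number of $H=\left<5,11\right>=v(A)$, every element of $A$ has zero coefficient at $t^{39}$; applying this to $h\in A$ gives $\alpha^7+\gamma=0$, i.e. $\gamma=\alpha^7$.

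I do not expect a serious obstacle here: the only thing to manage is the bookkeeping in the recursion $fh=g^2$ through degree $39$, and the arithmetic in characteristic $2$ makes that painless; the conceptual crux is merely the observation that $g^2/f$ must lie in $A$, so its coefficient at the gap $t^{39}$ of $H$ is forced to be $0$. As a consistency check, the coefficient of $t^{34}$ in $h$ works out to $\alpha^2+\beta$, whose vanishing (forced because $34\notin H$ as well) recovers $\beta=\alpha^2$; one can in fact organize the proof so that both $\beta=\alpha^2$ and $\gamma=\alpha^7$ fall out of the single fact $g^2/f\in A$, quoting Proposition~\ref{lemma2} only for $\varepsilon=0$ and $\alpha\ne 0$. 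Either way the parameters $\delta$ and $\tau$ remain unconstrained at this stage, as they must be in view of Theorem~\ref{main}.
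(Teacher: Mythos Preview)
Your proof is correct. The first paragraph matches the paper's implicit reduction (``It suffices to show $\gamma=\alpha^7$''), and your computation of $h=g^2/f$ through order $39$ checks out: with $\varepsilon=0$ and $\beta=\alpha^2$ one indeed gets $h_{34}=h_{37}=0$ and $h_{39}=\alpha^7+\gamma$, and since $A=k[[t^5,t^{11}]]$ is spanned over $k$ by $\{t^n:n\in H\}$, the coefficient of $t^{39}$ in any element of $A$ vanishes.

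The paper takes a different route for $\gamma=\alpha^7$. Rather than working with the single quotient $g^2/f\in A$, it uses the stronger inclusion $g^2\in fI=(f^2,fg)$, writes $g^2=f^2\varphi+fg\psi$ with $\varphi,\psi\in A$, and then compares coefficients of $t^{51},\ldots,t^{60}$, exploiting that the low coefficients of $\varphi$ and $\psi$ are supported on $H$. This yields a table that successively pins down $\psi_5,\varphi_{11},\varphi_{15},\psi_{10},\psi_{11}$ and finally forces $\alpha^7+\gamma=0$ at the $t^{59}$ row. Your argument is shorter and more conceptual for this statement alone: one recursion, one gap. What the paper's approach buys is reusability --- the same table $(\#)$ is invoked verbatim in the proof of Corollary~\ref{2.5} to exhibit explicit $\varphi,\psi$ witnessing $g^2\equiv f^2\varphi+fg\psi\bmod t^{60}V$ (hence $g^2\in fI$) for the converse, and its structure is reprised in Section~6 for $\operatorname{ch}(k)\ne 2$. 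If you adopt your proof here, you will need to supply that witness separately when proving the converse direction.
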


\begin{proof}
It suffices to show that $\gamma =\alpha^7$. We choose elements $\varphi=\sum_{i=0}^\infty \varphi_it^i, \psi=\sum_{i=0}^\infty \psi_it^i$ of $A$ where $\varphi_i, \psi_i \in k$, so that 
\begin{center}
$\ \ \ \ (\#)\ \ \ \ \ g^2 = f^2\varphi + fg \psi.$
\end{center}
Compare the coefficients of $\{t^{i}\}_{51 \le i \le 60}$ in both sides of Equation $(\#)$. Then, since $g = t^{26} + \tau t^{33}$, we get the following table.
$$
\begin{tabular}{|c|l|l|}
\hline
$\text{Monomials}$   & $\text{Equation}~g^2=f^2 \varphi + fg \psi$ &  $\text{Results}$ \\ \hline
$t^{51}$ & $0=\varphi_{11}+\psi_5$ & {} \\ \hline 
$t^{52}$ & $1=\psi_5 \alpha$   & $\psi_5=\varphi_{11}=\frac{1}{\alpha}$ \\ \hline 
$t^{53}$ & $0=\varphi_{11}\alpha^2 +\alpha^2 \psi_5$ &{}  \\ \hline
$t^{54}$ & $0=0$ \\ \hline
$t^{55}$ & $0=\varphi_{15}+\varphi_{11}\alpha^4$ & $\varphi_{15}=\alpha^3$ \\ \hline
$t^{56}$ & $0=\varphi_{16} + \psi_{10}$   & $\psi_{10}=\varphi_{16}$ \\ \hline
$t^{57}$ & $0=\varphi_{15}\alpha^2 + \psi_{11} + \psi_{10}\alpha$ & {} \\ \hline
$t^{58}$ & $0=\varphi_{16}\alpha^2+\alpha \psi_{11}+\alpha^2\psi_{10} + \psi_5 (\tau + \gamma)=\alpha \psi_{11} + \frac{1}{\alpha}(\tau + \gamma)$ & $\psi_{11} = \frac{\tau+\gamma}{\alpha^2}$ \\ \hline
$t^{59}$ & $0=\alpha^4 \varphi_{15} + \alpha^2\psi_{11} + \tau \alpha \psi_5=\alpha^7+\alpha^2 {\cdot}\frac{\tau + \gamma}{\alpha^2} + \tau \alpha {\cdot}\frac{1}{\alpha}=\alpha^7+\gamma$ & $\gamma =\alpha^7$ \\ \hline$t^{60}$ & $0=\psi_5\beta\tau+\varphi_{20} + \varphi_{16}\beta^2$   & $\varphi_{20}+\varphi_{16}\beta^2= \frac{\beta \tau}{\alpha}$ \\ \hline
\end{tabular} \vspace{0.3em}
$$
The third column of the table consists of the results of computation of the coefficients $\varphi_i, \psi_j$ and eventually shows  $\gamma = \alpha^7$. 
\end{proof}

We finish the case where $\operatorname{ch}(k)=2$.

\begin{cor}\label{2.5}Let $\calY_A$ denote the set of Ulrich ideals of $A$ which possess the data $(a,b) = (20, 26)$.
Assume that $\operatorname{ch}(k)=2$. Then $$\calY_A=\{(t^{20} + \alpha t^{21} + \alpha^2 t^{22} + \alpha^7 t^{27} + \delta t^{33}, t^{26} + \tau t^{33}) \mid \alpha, \delta, \tau \in k, \alpha \ne 0 \}.$$
\end{cor}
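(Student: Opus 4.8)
The plan is to get the inclusion $\subseteq$ essentially for free from the results already proved, and to obtain the reverse inclusion by feeding the candidate ideals into the single criterion supplied by Lemma \ref{2.1}. For $\subseteq$, I would take $I\in\calY_A$ and, keeping the notation fixed just before Proposition \ref{prop2}, invoke Proposition \ref{prop1}~(2) to write $I=(f,g)$ with $f=t^{20}+\alpha t^{21}+\beta t^{22}+\gamma t^{27}+\delta t^{33}$ and $g=t^{26}+\varepsilon t^{27}+\tau t^{33}$. Proposition \ref{lemma2} gives $\varepsilon\ne\alpha$, $3\varepsilon=2\alpha$, and $\beta=\alpha^2-2\varepsilon^2$; since $\operatorname{ch}(k)=2$ the relation $3\varepsilon=2\alpha$ collapses to $\varepsilon=0$, whence $\alpha\ne0$ and $\beta=\alpha^2$, and Theorem \ref{2.4} then yields $\gamma=\alpha^7$. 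Thus $I$ has the asserted shape. (If one also wants uniqueness of $\alpha,\delta,\tau$, it comes out of the proof of Lemma \ref{2.1}: the images of $t^{21},t^{22},t^{27},t^{33}$ lie in a $k$-basis of $A/I$, so an equality of two ideals of this form forces equality of the parameters.)

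For the reverse inclusion I would fix $\alpha,\delta,\tau\in k$ with $\alpha\ne0$, put $f=t^{20}+\alpha t^{21}+\alpha^2 t^{22}+\alpha^7 t^{27}+\delta t^{33}$, $g=t^{26}+\tau t^{33}$, $J=(f,g)$, and observe that here $\varepsilon=0\ne\alpha$, $\beta=\alpha^2\ne0$, and $\beta=\alpha^2\ne0=\varepsilon(\alpha-\varepsilon)$, so that Lemma \ref{2.1} applies: it gives $\fkc=t^{40}V\subseteq J$ and $A/J\cong J/(f)$, and reduces the Ulrich property of $J$ to the single membership $g^2\in fJ$. Because $\operatorname{ch}(k)=2$ we have $g^2=t^{52}+\tau^2t^{66}$, and since $f\fkc=t^{60}V\subseteq fJ$ this membership is equivalent to $t^{52}\in fJ$. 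The latter I would check by an explicit computation, using once more that $\operatorname{ch}(k)=2$ kills the odd cross terms of $f^2$ and $fg$: the combination $\tfrac1\alpha f^2t^{11}+\tfrac1\alpha fg\,t^{5}$ already produces $t^{52}$ (and $t^{51}$, which cancels) modulo $t^{55}V$, and the remaining terms at degrees $55,57,58,59$ are killed off by suitable multiples of $f^2t^{15},\ f^2t^{16},\ fg\,t^{10},\ fg\,t^{11}$; the only coefficient at which this could genuinely fail is exactly the quantity that appeared as $\alpha^7+\gamma$ in the table of Theorem \ref{2.4}, and it vanishes here because $\gamma=\alpha^7$ and $\operatorname{ch}(k)=2$. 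Hence $g^2\in fJ$ and $J$ is an Ulrich ideal of $A$. Finally, $\rmo(f)=20$ forces $a=2\ell_A(A/J)=20$ (using $\ell_A(A/J)=10$ from Lemma \ref{2.1}), so the restriction on admissible pairs $(a,b)$ recalled at the start of Section \ref{section2} pins down $(a,b)=(20,26)$, and therefore $J\in\calY_A$.

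The step I expect to be the real obstacle is the verification $t^{52}\in fJ$ in the reverse inclusion. It is ``routine'' only in the sense that it is the same finite coefficient bookkeeping as in the proof of Theorem \ref{2.4}, but it is precisely the point where the hypothesis $\operatorname{ch}(k)=2$ and the exact values $\beta=\alpha^2$, $\gamma=\alpha^7$ are used in an essential way, so one has to track the cancellations carefully and confirm that none of the needed vanishings is accidental. Everything else in the argument is a direct quotation of Propositions \ref{prop1}, \ref{lemma2}, Theorem \ref{2.4}, and Lemma \ref{2.1}, plus the bookkeeping identifying the invariant $(a,b)$.
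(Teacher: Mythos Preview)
Your proposal is correct and follows essentially the same route as the paper: the forward inclusion is read off from Theorem \ref{2.4} (via Proposition \ref{prop1}(2) and Proposition \ref{lemma2}), and for the reverse inclusion the paper likewise checks the hypotheses of Lemma \ref{2.1}, reduces the Ulrich property to $g^2\in fJ$, and then uses the coefficient table $(\#)$ from the proof of Theorem \ref{2.4} to exhibit an explicit combination $f^2\varphi+fg\psi$ congruent to $g^2$ modulo $t^{60}V=f\fkc\subseteq fJ$. Your inclusion of the correction terms $f^2t^{16}$ and $fg\,t^{10}$ (needed when $\tau\ne 0$) and your explicit verification that $(a,b)=(20,26)$ for the constructed $J$ are a bit more careful than the paper, but the strategy is identical.
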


\begin{proof}
Let $\alpha, \delta, \tau \in k, \alpha \ne 0$ and let $f=t^{20} + \alpha t^{21} + \alpha^2 t^{22} + \alpha^7 t^{27} + \delta t^{33}$, $g=t^{26} + \tau t^{33}$. We set $I = (f,g)$. Thanks to Theorem \ref{2.4}, it suffices to show $I \in \calX_A$. Since $A/I \cong I/(f)$ by Lemma \ref{2.1} (1), it is enough to show $g^2 \in (f^2,fg)$. Notice that the table $(\#)$ given in the proof of Theorem \ref{2.4} actually shows that if we set $$\varphi = \frac{1}{\alpha}t^{11} + \alpha^3 t^{15}\ \ \text{and}\ \ \psi = \frac{1}{\alpha}t^5+\frac{\tau + \alpha^7}{\alpha^2}t^{11},$$ we then have $$g^2 \equiv~f^2\varphi  + fg \psi~\mod~t^{60}V,$$ which implies $g^2 \in fI$, because $t^{60}V=f\fkc$ and $\fkc \subseteq I$ by Lemma  \ref{2.1} (1). 
\end{proof}

\begin{ex}[{cf. Theorem \ref{thm1}}]
Let $I=(t^{20}+t^{21}+t^{22}+t^{27}, t^{26})$. Then $I$ is an Ulrich ideal of $A$ if and only if $\operatorname{ch}(k)=2$. 
\end{ex}

\begin{proof}
It is enough to prove the {\em only if} part. Suppose that $I$ is an Ulrich ideal of $A$ and consider $\xi = \frac{g}{f}= \frac{t^{26}}{t^{20}+t^{21}+t^{22}+t^{27}}$. We then have
$$\xi = t^6-t^7+t^9-t^{10}+t^{12}-2t^{13} + \rho$$ with $\rmo(\rho) \ge 14$. Therefore
$$\frac{g^2}{f}=g\xi = t^{32} - t^{33} + t^{35}-t^{36}  + t^{38} - 2t^{39} + t^{26}\rho$$
which forces that $-2 =0$, because $g\xi \in I$ and $39 \not\in H$. Hence, $\operatorname{ch}(k)=2$.
\end{proof}

\section{The case where $H=\left<5, 11 \right>, (a,b)=(20,26)$, and $\operatorname{ch}(k)\ne2$}
Let $\alpha, \beta, \gamma, \delta, \varepsilon, \tau \in k$ and let
$$
f=t^{20} +\alpha t^{21}+\beta t^{22} +\gamma t^{27}+\delta t^{33},\ \ g= t^{26} +\varepsilon t^{27} + \tau t^{33}.
$$
We assume that $g^2=f^2\varphi + fg\psi$ for some $\varphi=\sum_{i=0}^\infty\varphi_it^i, \psi=\sum_{i=0}^\infty\psi_it^i \in A$ ($\varphi_i, \psi_i \in k$). Then, comparing the coefficients of $\{t^i\}_{0 \le i \le 60}$ in both sides of the equation
$$
g^2=f^2{\cdot}\sum_{i=0}^\infty\varphi_it^i+fg{\cdot}\sum_{i=0}^\infty\psi_it^i,
$$
we have $\psi_0=0, \varphi_0=\varphi_5=\varphi_{10}=0$ and the following table $(\#)$.
{\footnotesize
$$
\begin{tabular}{|c|l|}
\hline
$\text{Monomials}$   &  $\text{Results}$ \\ \hline
$t^{51}$ & $0=\psi_{5}+\varphi_{11}$ \\ \hline 
$t^{52}$ & $1=(\varepsilon + \alpha)\psi_5 + 2\alpha \varphi_{11}=(\varepsilon -\alpha)\psi_5$   \\ \hline 
$t^{53}$ & $2\varepsilon=\psi_5(\varepsilon \alpha + \beta) + \varphi_{11}(2\beta+\alpha^2)=\psi_5(\varepsilon \alpha - \beta - \alpha^2)$  \\ \hline
$t^{54}$ & $\varepsilon^2 =\psi_5{\cdot}\varepsilon \beta +2\varphi_{11}\alpha \beta = \psi_5\beta(\varepsilon - 2\alpha)$ \\ \hline
$t^{55}$ & $0=\varphi_{15}+\varphi_{11}\beta^2$ \\ \hline
$t^{56}$ & $0=\varphi_{16} + \varphi_{15}{\cdot}2\alpha +\psi_{10}$   \\ \hline
$t^{57}$ & $0=\psi_{11}+\psi_{10}(\varepsilon + \alpha) + \varphi_{16}{\cdot}2\alpha+\varphi_{15}(\alpha^2+2\beta)$  \\ \hline
$t^{58}$ & $0=\psi_{11}(\varepsilon + \alpha)+ \psi_{10}(\alpha \varepsilon + \beta)+ \psi_5(\tau + \gamma) + \varphi_{16}(\alpha^2+ 2 \beta)+2\alpha \beta \varphi_{15}+2\gamma \varphi_{11}$ \\ \hline
$t^{59}$ & $2\tau=\psi_{11}(\alpha \varepsilon + \beta)+\psi_{10}\beta \varepsilon +\psi_5(\varepsilon \gamma + \alpha \tau)+ \varphi_{16}{\cdot}2\alpha \beta +\varphi_{15}\beta^2 + \varphi_{11}{\cdot}2\alpha \gamma$  \\ \hline
$t^{60}$ & $2\varepsilon \tau=\beta \varepsilon \psi_{11} ++ \psi_5{\cdot}\beta \tau +  \varphi_{20} + \varphi_{16}\beta^2 + 2\beta \gamma{\cdot}\varphi_{11}$   \\ \hline
\end{tabular} \vspace{0.3em}
$$
}
Hence, we have the following.

\begin{lem}
$\varepsilon \ne \alpha, \ \psi_5=\frac{1}{\varepsilon -\alpha}, \ \psi_5(\varepsilon \alpha -\beta -\alpha^2)=2 \varepsilon$,\  and \ $\psi_5\beta(\varepsilon -2\alpha)=\varepsilon^2$.
\end{lem}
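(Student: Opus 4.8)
The plan is to read off all four assertions directly from the first four rows of the table $(\#)$ displayed just above the statement; indeed, the simplified right-hand expressions already recorded in those rows are exactly what results from eliminating the auxiliary coefficient $\varphi_{11}$, so it only remains to spell this out.

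First I would use the row labelled $t^{51}$, which reads $0 = \psi_5 + \varphi_{11}$, to record $\varphi_{11} = -\psi_5$; this is precisely the relation used to obtain the second equalities in the rows $t^{52}$, $t^{53}$, $t^{54}$. The row $t^{52}$ then gives $1 = (\varepsilon + \alpha)\psi_5 + 2\alpha\varphi_{11} = (\varepsilon - \alpha)\psi_5$. Since the left-hand side is nonzero, this simultaneously forces $\varepsilon \ne \alpha$ and yields $\psi_5 = \frac{1}{\varepsilon - \alpha}$, which are the first two claims.

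For the remaining two claims I would simply substitute $\varphi_{11} = -\psi_5$ into the rows $t^{53}$ and $t^{54}$. The row $t^{53}$, namely $2\varepsilon = \psi_5(\varepsilon\alpha + \beta) + \varphi_{11}(2\beta + \alpha^2)$, collapses to $2\varepsilon = \psi_5(\varepsilon\alpha - \beta - \alpha^2)$, which is the third identity; and the row $t^{54}$, namely $\varepsilon^2 = \varepsilon\beta\psi_5 + 2\alpha\beta\varphi_{11}$, collapses to $\varepsilon^2 = \beta(\varepsilon - 2\alpha)\psi_5$, which is the fourth identity.

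The only genuine computation sits not in this lemma but in establishing the rows of $(\#)$ themselves — that is, comparing for $51 \le i \le 54$ the coefficient of $t^i$ on the two sides of $g^2 = f^2\varphi + fg\psi$, using the vanishing $\psi_0 = \varphi_0 = \varphi_5 = \varphi_{10} = 0$ together with the explicit forms $f = t^{20} + \alpha t^{21} + \beta t^{22} + \gamma t^{27} + \delta t^{33}$ and $g = t^{26} + \varepsilon t^{27} + \tau t^{33}$. That bookkeeping has already been carried out in the passage immediately preceding the statement, so in the proof I would just invoke the table; no further obstacle arises.
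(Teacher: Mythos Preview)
Your proposal is correct and matches the paper's approach exactly: the paper presents the table $(\#)$, writes ``Hence, we have the following,'' and states the lemma with no further proof, so the four assertions are indeed meant to be read off directly from rows $t^{51}$--$t^{54}$ after eliminating $\varphi_{11}$ via $\varphi_{11}=-\psi_5$. You have simply made explicit what the paper leaves to the reader.
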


We now furthermore assume that $\operatorname{ch}(k) \ne 2$ and that $\varepsilon \ne \alpha$, $3 \varepsilon = 2\alpha$, and $\beta = \alpha^2-2\varepsilon^2$. We then have $\varepsilon \ne0, \alpha = \frac{3}{2}\varepsilon$, and $\beta = \frac{1}{4}\varepsilon^2$. Therefore, thanks to the table $(\#)$, we see
\begin{eqnarray*}
\varphi_{11} &=& -\psi_5~=~\frac{2}{\varepsilon}\\ 
\varphi_{15}&=&-\frac{1}{8}\varepsilon^3\\
\varphi_{16}&=&\frac{3}{8}\varepsilon^4-\psi_{10}\\
\psi_{11}&=&\frac{1}{2}\varepsilon\psi_{10} - \frac{25}{32}\varepsilon^5\\
\vspace{-2em}
\end{eqnarray*}
where the last equality follows from the comparison of the coefficients of $t^{57}$. Hence, considering the coefficients of $t^{58}$, we get
$$
\frac{2}{\varepsilon}(\tau -\gamma) =\frac{1}{4}{\varepsilon^2}\psi_{10}-\frac{65}{64}\varepsilon^6,
$$
while we have by the coefficients of $t^{59}$ that 
$$
5\tau-4\gamma =\psi_{10}{\cdot}\frac{3}{8}\varepsilon^3 - \frac{35}{32}\varepsilon^7.
$$
Therefore, we get the following.

\begin{prop}\label{lemma3}
Assume that $\operatorname{ch}(k) \ne 2$ and that $\varepsilon \ne \alpha$, $3 \varepsilon = 2\alpha$, $\beta = \alpha^2-2\varepsilon^2$. Then we have
$$
\gamma=2\tau - \frac{55}{128}\varepsilon^7, \ \ \psi_{10}= \frac{15}{2}\varepsilon^4 - \frac{8 \tau}{\varepsilon^3}.$$
Hence, $\psi_{11}= \frac{95}{32}\varepsilon^5 - \frac{4\tau}{\varepsilon^2}$, $\varphi_{16}=-\frac{57}{8}\varepsilon^4 + \frac{8\tau}{{\varepsilon}^3}$, and $\varphi_{20}=\varepsilon \tau +\frac{17}{128}\varepsilon^8$. 
\end{prop}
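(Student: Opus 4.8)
The plan is to observe that, under the standing hypotheses, the proposition has already been reduced to a small linear-algebra problem, and is proved simply by solving it. Recall from Proposition~\ref{lemma2} and the paragraph following it that $\varepsilon\neq 0$, $\alpha=\tfrac32\varepsilon$, $\beta=\tfrac14\varepsilon^2$, and that the identity $g^2=f^2\varphi+fg\psi$ together with the table $(\#)$ has already forced $\varphi_{11}=-\psi_5=\tfrac2\varepsilon$ and $\varphi_{15}=-\tfrac18\varepsilon^3$, while the $t^{56}$- and $t^{57}$-rows give $\varphi_{16}=\tfrac38\varepsilon^4-\psi_{10}$ and $\psi_{11}=\tfrac12\varepsilon\psi_{10}-\tfrac{25}{32}\varepsilon^5$. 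Substituting these known quantities, together with $\alpha=\tfrac32\varepsilon$ and $\beta=\tfrac14\varepsilon^2$, into the $t^{58}$- and $t^{59}$-rows of $(\#)$ turns those two rows into a pair of linear relations in the two unknowns $\gamma$ and $\psi_{10}$ (with $\tau,\varepsilon$ as parameters):
\[
\tau-\gamma=\tfrac18\varepsilon^3\psi_{10}-\tfrac{65}{128}\varepsilon^7,
\qquad
5\tau-4\gamma=\tfrac38\varepsilon^3\psi_{10}-\tfrac{35}{32}\varepsilon^7 .
\]

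First I would record these two equations explicitly as the reductions of the $t^{58}$- and $t^{59}$-rows, and then solve the resulting $2\times2$ system. The coefficient matrix of $(\psi_{10},\gamma)$ has determinant $\tfrac18\varepsilon^3\neq0$ — this is precisely where $\varepsilon\neq0$ and $\operatorname{ch}(k)\neq2$ are used, both to invert the system and to make sense of the fractions that appear — so the solution is unique and an elementary elimination gives
\[
\gamma=2\tau-\tfrac{55}{128}\varepsilon^7,
\qquad
\psi_{10}=\tfrac{15}{2}\varepsilon^4-\tfrac{8\tau}{\varepsilon^3}.
\]

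Finally I would back-substitute. Plugging this value of $\psi_{10}$ into $\psi_{11}=\tfrac12\varepsilon\psi_{10}-\tfrac{25}{32}\varepsilon^5$ yields $\psi_{11}=\tfrac{95}{32}\varepsilon^5-\tfrac{4\tau}{\varepsilon^2}$; plugging it into $\varphi_{16}=\tfrac38\varepsilon^4-\psi_{10}$ yields $\varphi_{16}=-\tfrac{57}{8}\varepsilon^4+\tfrac{8\tau}{\varepsilon^3}$; and substituting $\beta=\tfrac14\varepsilon^2$, $\psi_5=-\tfrac2\varepsilon$, $\varphi_{11}=\tfrac2\varepsilon$, and the values of $\gamma,\psi_{11},\varphi_{16}$ just found into the $t^{60}$-row of $(\#)$ solves for $\varphi_{20}=\varepsilon\tau+\tfrac{17}{128}\varepsilon^8$. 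There is no conceptual obstacle: the entire argument is bookkeeping. The one place to be careful is the arithmetic with the rational coefficients in the $t^{59}$-elimination and in the $t^{60}$-substitution, where several eighths, thirty-seconds and hundred-twenty-eighths must be combined correctly; once the two linear relations above are written down accurately, the rest is routine.
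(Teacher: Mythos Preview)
Your proposal is correct and follows exactly the paper's approach: substitute the already-determined values of $\varphi_{11},\varphi_{15},\varphi_{16},\psi_5,\psi_{11}$ (and $\alpha=\tfrac32\varepsilon$, $\beta=\tfrac14\varepsilon^2$) into the $t^{58}$- and $t^{59}$-rows of the table $(\#)$ to obtain two linear relations in $\gamma$ and $\psi_{10}$, solve, and back-substitute for $\psi_{11},\varphi_{16},\varphi_{20}$. Your first displayed relation is just the paper's $\tfrac{2}{\varepsilon}(\tau-\gamma)=\tfrac14\varepsilon^2\psi_{10}-\tfrac{65}{64}\varepsilon^6$ multiplied through by $\tfrac{\varepsilon}{2}$, so the two arguments are identical.
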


We are now ready to finish the case where $\operatorname{ch}(k) \ne 2$. Our conclusion is the following.

\begin{thm}\label{thm1} Let $\calY_A$ denote the set of Ulrich ideals of $A$ which possess the data $(a,b) = (20, 26)$.
Assume that $\operatorname{ch}(k)\ne 2$. Then
{\footnotesize
$$
\calY_A=\{(t^{20} + \alpha t^{21} + \beta t^{22} + \gamma t^{27} + \delta t^{33}, t^{26} + \varepsilon t^{27} + \tau t^{33}) \mid \alpha, \beta, \gamma, \delta, \varepsilon, \tau \in k, \varepsilon \ne \alpha, 3\varepsilon = 2 \alpha, \beta = \alpha^2 - 2\varepsilon^2, \gamma = 2\tau - \frac{55}{128}\varepsilon^7\}.
$$
 }
\end{thm}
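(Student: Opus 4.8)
The plan is to prove both inclusions and then the uniqueness statement. First I would establish the inclusion $\calY_A \subseteq \{\dots\}$: take any Ulrich ideal $I$ with data $(a,b)=(20,26)$. By Proposition~\ref{prop1}(2) we may write $I=(f,g)$ with $f=t^{20}+\alpha t^{21}+\beta t^{22}+\gamma t^{27}+\delta t^{33}$ and $g=t^{26}+\varepsilon t^{27}+\tau t^{33}$. Since $\operatorname{ch}(k)\ne 2$, Proposition~\ref{lemma2} gives $\varepsilon\ne\alpha$, $3\varepsilon=2\alpha$, $\beta=\alpha^2-2\varepsilon^2$; in particular $\varepsilon\ne 0$ (otherwise $\alpha=\varepsilon$), $\alpha=\tfrac32\varepsilon$, $\beta=\tfrac14\varepsilon^2$. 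Then Proposition~\ref{lemma3} — which applies precisely because $I$ being Ulrich means $g^2\in fI\subseteq (f^2,fg)$, so the coefficient computations in table $(\#)$ are valid — yields $\gamma=2\tau-\tfrac{55}{128}\varepsilon^7$. Rewriting $\alpha=3\varepsilon'$ with $\varepsilon'=\varepsilon/2$ (so $\alpha=3\varepsilon'$, $\varepsilon=2\varepsilon'$, $\beta=\alpha^2-2\varepsilon^2=9\varepsilon'^2-8\varepsilon'^2=\varepsilon'^2$, and $\gamma=2\tau-\tfrac{55}{128}(2\varepsilon')^7=2\tau-55\varepsilon'^7$) puts $I$ into exactly the displayed form with parameters $\varepsilon'$, $\tau$, $\delta$ and $\varepsilon'\ne 0$. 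This matches the statement of Theorem~\ref{main}; I should point out that the two displays (the one in Theorem~\ref{main} and the one in this final theorem) describe the same set after this reparametrization, since $3\varepsilon=2\alpha$ forces $\alpha=3\varepsilon'$, $\varepsilon=2\varepsilon'$ after renaming.

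Next I would prove the reverse inclusion. Fix $\varepsilon\ne 0$ and $\tau,\delta\in k$, set $\alpha=\tfrac32\varepsilon$, $\beta=\tfrac14\varepsilon^2$, $\gamma=2\tau-\tfrac{55}{128}\varepsilon^7$, and form $f,g$ and $J=(f,g)$. I need to check the hypotheses of Lemma~\ref{2.1}: $\varepsilon\ne\alpha$ (true since $\varepsilon\ne\tfrac32\varepsilon$ as $\varepsilon\ne 0$ and $\operatorname{ch}(k)\ne2$… more carefully, $\varepsilon=\alpha$ would give $\varepsilon=\tfrac32\varepsilon$, i.e. $\tfrac12\varepsilon=0$, i.e. $\varepsilon=0$, contradiction), $\beta\ne 0$ (since $\beta=\tfrac14\varepsilon^2\ne 0$), and $\beta\ne\varepsilon(\alpha-\varepsilon)$ (here $\varepsilon(\alpha-\varepsilon)=\varepsilon\cdot\tfrac12\varepsilon=\tfrac12\varepsilon^2\ne\tfrac14\varepsilon^2=\beta$). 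So Lemma~\ref{2.1}(1) gives $\fkc\subseteq J$ and $A/J\cong J/(f)$, and by Lemma~\ref{2.1}(2) it remains to verify $g^2\in fJ$, equivalently $g^2\in(f^2,fg)$. For this I would run the argument in the proof of Corollary~\ref{2.5}: the computations recorded in table $(\#)$ together with Proposition~\ref{lemma3} give explicit scalars $\varphi_{11},\varphi_{15},\varphi_{16},\varphi_{20},\psi_5,\psi_{10},\psi_{11}$ such that, setting $\varphi=\varphi_{11}t^{11}+\varphi_{15}t^{15}+\varphi_{16}t^{16}+\varphi_{20}t^{20}$ and $\psi=\psi_5t^5+\psi_{10}t^{10}+\psi_{11}t^{11}$, one has $g^2\equiv f^2\varphi+fg\psi\bmod t^{60}V$; since $t^{60}V=f\fkc\subseteq fJ$, this forces $g^2\in fJ$, hence $J$ is Ulrich with data $(a,b)=(20,26)$.

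Finally, the uniqueness of $\varepsilon,\tau,\delta$ for a given $I\in\calY_A$: from Lemma~\ref{2.1}'s proof the eleven monomials are cut down by relations to a $k$-spanning set of $A/I$ of size $\le 10$, and in fact the argument shows the images of a specific ten-element set (e.g.\ $1,t^5,t^{10},t^{11},t^{15},t^{16},t^{21},t^{22},t^{27},t^{33}$ modulo the extra relation $(\varepsilon-\alpha)t^{32}\equiv\beta t^{33}$, so $t^{32}$ is eliminated) form a $k$-basis of $A/I$ — here one uses that $\ell_A(A/I)=a/2\cdot\,$… actually $\ell_A(A/I)=10$ forced by $I/(f)\cong A/I$ and $\ell_A(A/(f))=20$. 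Given two presentations $(f,g)$ and $(f_1,g_1)$ of the same $I$ with parameters $(\varepsilon,\tau,\delta)$ and $(\varepsilon_1,\tau_1,\delta_1)$, subtract: $f-f_1$ and $g-g_1$ lie in $I$ and are combinations of $t^{21},t^{22},t^{27},t^{33}$ (note $\alpha,\beta,\gamma$ are determined by $\varepsilon,\tau$, so $f-f_1=\tfrac32(\varepsilon-\varepsilon_1)t^{21}+\tfrac14(\varepsilon^2-\varepsilon_1^2)t^{22}+(\gamma-\gamma_1)t^{27}+(\delta-\delta_1)t^{33}$ and $g-g_1=(\varepsilon-\varepsilon_1)t^{27}\,$-ish plus higher); using the basis of $A/I$, which contains the classes of $t^{21},t^{22},t^{27},t^{33}$, one concludes first $\varepsilon=\varepsilon_1$, then $\tau=\tau_1$, then $\delta=\delta_1$.

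The main obstacle will be the reverse-inclusion verification $g^2\in(f^2,fg)$: one must be confident that the consistency of the overdetermined linear system in table $(\#)$ — ten equations in the coefficients $t^{51},\dots,t^{60}$ against the unknowns $\varphi_{11},\varphi_{15},\varphi_{16},\varphi_{20},\psi_5,\psi_{10},\psi_{11}$ — really does have a solution exactly when $3\varepsilon=2\alpha$, $\beta=\alpha^2-2\varepsilon^2$, $\gamma=2\tau-\tfrac{55}{128}\varepsilon^7$ hold, i.e.\ that no further obstruction appears at $t^{60}$. Proposition~\ref{lemma3} already records the needed value $\varphi_{20}=\varepsilon\tau+\tfrac{17}{128}\varepsilon^8$ coming from the $t^{60}$-row, so the $t^{60}$ equation is satisfiable and no extra constraint is imposed; the remaining rows ($t^{51}$–$t^{59}$) were solved in deriving Proposition~\ref{lemma3}. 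Thus the verification reduces to bookkeeping already carried out in Sections preceding this one, and one only needs to assemble $\varphi,\psi$ and invoke $t^{60}V=f\fkc\subseteq fJ$ as in Corollary~\ref{2.5}.
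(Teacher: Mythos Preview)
Your proposal is correct and follows essentially the same approach as the paper: the forward inclusion via Proposition~\ref{lemma2} and Proposition~\ref{lemma3}, and the reverse inclusion by assembling the explicit $\varphi,\psi$ from the table~$(\#)$ and invoking $t^{60}V=f\fkc\subseteq fI$ together with Lemma~\ref{2.1}. The paper states the congruence $\bmod\ t^{61}V$ (since the $t^{60}$ row is matched once $\varphi_{20}$ is included), but your weaker $\bmod\ t^{60}V$ is also sufficient; your explicit check of the hypotheses of Lemma~\ref{2.1} is a detail the paper leaves implicit, and your discussion of uniqueness and the reparametrization $\varepsilon'=\varepsilon/2$ belongs to the proof of Theorem~\ref{main} rather than to this statement, so it is extra but harmless.
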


\begin{proof}
Let $\alpha, \beta, \gamma, \delta, \varepsilon, \tau \in k$ and let
$$
f=t^{20} +\alpha t^{21}+\beta t^{22} +\gamma t^{27}+\delta t^{33},\ \ g= t^{26} +\varepsilon t^{27} + \tau t^{33}.
$$ If $(f,g)$ is an Ulrich ideal of $A$, then by Proposition \ref{lemma2} we have $\varepsilon \ne \alpha$, $3 \varepsilon = 2\alpha$ and $\beta = \alpha^2-2\varepsilon^2$. Therefore, it follows from Proposition \ref{lemma3} that $\gamma = 2\tau - \frac{55}{128}\varepsilon^7$, because $\operatorname{ch}(k) \ne 2$.

Conversely, let $I = (f,g)$ where $f=t^{20} +\alpha t^{21}+\beta t^{22} +\gamma t^{27}+\delta t^{33}$ and $g= t^{26} +\varepsilon t^{27} + \tau t^{33}$ with $\alpha, \beta, \gamma, \delta, \varepsilon, \tau \in k$ such that $\beta = \alpha^2-2\varepsilon^2$ and $\gamma = 2\tau - \frac{55}{128}\varepsilon^7$. Let
\begin{eqnarray*}
\varphi &=& \frac{2}{\varepsilon}t^{11} - \frac{1}{8}\varepsilon^3t^{15} + (\frac{8\tau}{\varepsilon^3}-\frac{57}{8}\varepsilon^4)t^{16} + (\varepsilon \tau + \frac{17}{128}\varepsilon^3)t^{20},\\ 
\psi&=&-\frac{2}{\varepsilon}t^5 + (\frac{15}{2}\varepsilon^4 - \frac{8\tau}{\varepsilon^3})t^{10} + (\frac{95}{32}\varepsilon^5- \frac{4\tau}{\varepsilon^2})t^{11}.\\
\vspace{-2em}
\end{eqnarray*}
Then the table $(\#)$ shows that $$g^2 \equiv f^2 \varphi + fg \psi~\mod~t^{61}V.$$
On the other hand, by Lemma  \ref{2.1} (1) we have $\fkc = t^{40}V \subseteq I$. Hence, $t^{60}V=f\fkc \subseteq fI$, so that $g^2 \in (f^2,fg)$. Thus, $I$ is an Ulrich ideal of $A$ by Lemma  \ref{2.1}. This completes the proof of the case where $\operatorname{ch}(k) \ne 2$.
\end{proof}

Let us give a brief note about the proof of Theorem \ref{main}.

\begin{proof}[Proof of Theorem {\rm \ref{main}}]
Thanks to Corollary \ref{2.5}, we may assume that $\operatorname{ch}(k) \ne 2$. Let $\alpha, \beta, \gamma, \delta, \varepsilon, \tau \in k$ such that $$(\rmC_1)\ \ \ \  \varepsilon \ne \alpha, \ 3\varepsilon = 2 \alpha, \ \beta = \alpha^2 - 2\varepsilon^2, \ \gamma = 2\tau - \frac{55}{128}\varepsilon^7.$$ Then, setting $\varepsilon'= \frac{1}{2}\varepsilon$, we have $\varepsilon' \ne 0$ and 
$$(\rmC_2)\ \ \ \ \ \varepsilon=2\varepsilon',\  \alpha = 3\varepsilon', \ \beta ={ \varepsilon'}^2, \ \gamma =2\tau - 55{\varepsilon'}^7.$$ It is certain that once we choose $\delta, \varepsilon', \tau \in k$ so that $\varepsilon' \ne 0$ and set $\varepsilon, \alpha, \beta, \gamma$ like Condition $(\rmC_2)$, the elements $\alpha, \beta, \gamma, \delta, \varepsilon, \tau \in k$ satisfy Condition $(\rmC_1)$ required in Theorem \ref{thm1}, and therefore $(t^{20} + 3\varepsilon'  t^{21} + {\varepsilon'}^2 t^{22} + (2\tau - 55{\varepsilon'}^7)t^{27} + \delta t^{33}, t^{26} + 2{\varepsilon'} t^{27} + \tau t^{33}) \in \calX_A$.

Let us check the second assertion. Let 
\begin{eqnarray*}
f&=&t^{20} + 3\varepsilon  t^{21} + \varepsilon^2 t^{22} + (2\tau - 55 \varepsilon^7)t^{27} + \delta t^{33},\\ 
g&=& t^{26} + 2\varepsilon t^{27} + \tau t^{33},\\
f_1&=&t^{20} + 3\varepsilon_1 t^{21} + {\varepsilon_1}^2 t^{22} + (2\tau_1 - 55 {\varepsilon_1}^7)t^{27} + \delta_1 t^{33},\\
g_1&=& t^{26} + 2\varepsilon_1 t^{27} + \tau_1 t^{33}.\\
\end{eqnarray*}
where $\delta, \varepsilon, \tau, \delta_1, \varepsilon_1, \tau_1 \in k$ such that $\varepsilon \ne 0, \varepsilon_1 \ne 0$, and assume that $I =(f,g)=(f_1,g_1)$.

\bigskip
\noindent
We then have 
{\footnotesize
$$3(\varepsilon -\varepsilon_1)t^{21} + (\varepsilon^2-{\varepsilon_1}^2)t^{22} + \left\{2(\tau -\tau_1)-55(\varepsilon^7 - {\varepsilon_1}^7)\right\}t^{27} + (\delta -\delta_1)t^{33}, \  \ 2(\varepsilon-\varepsilon_1)t^{27}+(\tau - \tau_1)t^{33} \in I.$$ 
}
Therefore, since the images of $t^{21}, t^{22},t^{27}, t^{33}$ form a part of a $k$-basis of $A/I$ as is shown in the proof of Lemma \ref{2.1},  we readily get $\delta = \delta_1$, $\tau = \tau_1$. We have $\varepsilon = \varepsilon_1$, because $3(\varepsilon -\varepsilon_1) = 2(\varepsilon -\varepsilon_1)=0$. This finishes the proof of Theorem \ref{main}.
\end{proof}

\begin{cor}[{\cite{GOTWY}}]
The ring $A$ contains no Ulrich ideals generated by monomials in $t$.
\end{cor}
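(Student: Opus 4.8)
The plan is to argue by contradiction from the complete classification obtained above. Suppose $I$ is an Ulrich ideal of $A = k[[t^5,t^{11}]]$ generated by monomials in $t$. The first ingredient I would use is the elementary fact that a monomial ideal $J = (t^{m_1},\dots,t^{m_r})$ of $A$ (with $m_j \in H = \langle 5,11\rangle$) is, as a $k$-vector space, the closed span of the monomials it contains: indeed $J = \sum_j t^{m_j}A$ is spanned by the $t^\ell$ with $\ell \in \bigcup_j (m_j + H)$, and this subset of $H$ is cofinite because $J$ is $\m$-primary. Consequently, if $x = \sum_i c_i t^i \in J$ (the sum over distinct $i \in H$), then $t^i \in J$ for every $i$ with $c_i \ne 0$. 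So it suffices to exhibit, for every Ulrich ideal $I$ allowed by the classification, a monomial $t^\ell$ that occurs with nonzero coefficient in the leading generator $f$ of $I$ while $t^\ell \notin I$: then $f \in I$ forces $t^\ell \in I$, a contradiction.

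By Proposition~\ref{lemma1} and the classification theorems, every Ulrich ideal of $A$ is two-generated with $(a,b) = (10,27)$ or $(a,b) = (20,26)$. When $(a,b) = (10,27)$, Proposition~\ref{prop1}(1) gives $f = t^{10} + \alpha_1 t^{11} + \alpha_2 t^{16} + \alpha_3 t^{22}$ with $\alpha_1 \ne 0$, while the proof of the Theorem of Section~\ref{section2} (using $\ell_A(A/I) = a/2 = 5$ from Lemma~\ref{key}(3)) shows that the images of $1, t^5, t^{11}, t^{16}, t^{22}$ form a $k$-basis of $A/I$; hence $t^{11} \notin I$ and we take $\ell = 11$. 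When $(a,b) = (20,26)$, Propositions~\ref{prop1}(2) and \ref{lemma2} give $f = t^{20} + \alpha t^{21} + \beta t^{22} + \gamma t^{27} + \delta t^{33}$ with $\varepsilon \ne \alpha$, $3\varepsilon = 2\alpha$, and $\beta = \alpha^2 - 2\varepsilon^2$. I would then observe that $\alpha$ and $\beta$ cannot both vanish: if $\alpha = 0$ then $3\varepsilon = 0$, and when $\operatorname{ch}(k) \ne 3$ this gives $\varepsilon = 0 = \alpha$, contradicting $\varepsilon \ne \alpha$, whereas when $\operatorname{ch}(k) = 3$ we get $\varepsilon \ne 0$ and $\beta = -2\varepsilon^2 \ne 0$. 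On the other hand, the proof of Lemma~\ref{2.1} (again with $\ell_A(A/I) = a/2 = 10$ from Lemma~\ref{key}(3)) shows that the images of $t^{21}, t^{22}, t^{27}, t^{33}$ are part of a $k$-basis of $A/I$; so $t^{21}, t^{22} \notin I$ and we take $\ell \in \{21,22\}$ according to which of $\alpha, \beta$ is nonzero.

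Putting the two cases together produces the contradiction in every instance, which proves that $A$ has no Ulrich ideal generated by monomials. The one point demanding a little care is the characteristic bookkeeping in the $(20,26)$ case: the relation $3\varepsilon = 2\alpha$ does not by itself prevent the linear term $\alpha t^{21}$ of $f$ from vanishing (it does vanish when $\operatorname{ch}(k) = 3$), and there one must instead invoke the quadratic term $\beta t^{22}$ with $\beta = -2\varepsilon^2 \ne 0$. Everything else is immediate from the classification and from the explicit $k$-bases of $A/I$ recorded in the proofs above.
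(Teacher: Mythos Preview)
Your argument is correct and follows the same route the paper intends: the corollary is stated without proof, as an immediate consequence of the complete classification of $\calX_A$, and you have simply spelled out why no ideal on the list can be monomial. The key observation---that for a monomial ideal every monomial appearing in any element must itself lie in the ideal, while each normal-form generator $f$ contains a monomial belonging to the explicit $k$-basis of $A/I$---is exactly what one extracts from the proofs of Theorem~\ref{thm1} (Section~\ref{section2}) and Lemma~\ref{2.1}.

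One small remark: your case analysis showing that $\alpha$ and $\beta$ cannot both vanish when $(a,b)=(20,26)$ is fine, but you could have shortened it by noting that in fact $\beta\ne 0$ in \emph{every} characteristic. Indeed, $3\varepsilon=2\alpha$ and $\varepsilon\ne\alpha$ force $\varepsilon\ne 0$ when $\operatorname{ch}(k)\ne 2$ (so $\beta=\tfrac14\varepsilon^2\ne 0$) and force $\alpha\ne 0$ when $\operatorname{ch}(k)=2$ (so $\beta=\alpha^2\ne 0$); either way the single monomial $t^{22}$ already does the job, and the characteristic-$3$ digression is unnecessary.
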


\begin{cor}
Let 
$f=t^{20} +3t^{21}+t^{22} +\gamma t^{27}+\delta t^{33}$ and $g= t^{26} + 2 t^{27} + \tau t^{33}$ with $\gamma, \delta, \tau \in k$ such that $\gamma = 2\tau - 55$. Then, $I=(f,g)$ is an Ulrich ideal of $A$ in any characteristic.
\end{cor}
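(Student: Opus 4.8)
The plan is to recognize $I$ as the member of the family $\calY_A$ obtained by specializing the parameter $\varepsilon$ to $1$ in the classification given by Theorem~\ref{main}. That theorem is an unconditional statement, valid in every characteristic, asserting that
$$\calY_A=\{(t^{20} + 3\varepsilon  t^{21} + \varepsilon^2 t^{22} + (2\tau - 55 \varepsilon^7)t^{27} + \delta t^{33},\ t^{26} + 2\varepsilon t^{27} + \tau t^{33}) \mid \delta, \varepsilon, \tau \in k,\ \varepsilon \ne 0 \}.$$
First I would substitute $\varepsilon = 1$: the first generator becomes $t^{20}+3t^{21}+t^{22}+(2\tau-55)t^{27}+\delta t^{33}$ and the second becomes $t^{26}+2t^{27}+\tau t^{33}$, which, under the standing hypothesis $\gamma = 2\tau - 55$, is precisely the pair $(f,g)$ of the statement. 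Since $1 \ne 0$ in every field, the choice $\varepsilon = 1$ is admissible regardless of $\operatorname{ch}(k)$, whence $I \in \calY_A \subseteq \calX_A$; in particular $I$ is an Ulrich ideal of $A$ in any characteristic.

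The one point worth a word is the behavior in characteristic $2$, where the literal integer coefficients collapse, $3 \equiv 1$, $2 \equiv 0$, $55 \equiv 1$. Here $\gamma = 2\tau - 55 \equiv 1$, so $f = t^{20} + t^{21} + t^{22} + t^{27} + \delta t^{33}$ and $g = t^{26} + \tau t^{33}$, which is exactly the member with $\alpha = 1$ of the family described in Corollary~\ref{2.5}. Thus the conclusion is consistent with, and in fact already contained in, that earlier classification, so no additional argument is needed.

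Should one prefer a proof that does not invoke the full classification, an alternative plan is this. Apply Lemma~\ref{2.1} with the data $\alpha = 3$, $\beta = 1$, $\gamma = 2\tau - 55$, $\delta$, $\varepsilon = 2$, $\tau$; its three hypotheses $\varepsilon \ne \alpha$, $\beta \ne 0$, $\beta \ne \varepsilon(\alpha - \varepsilon)$ read $2 \ne 3$, $1 \ne 0$, $1 \ne 2$, each of which holds in every characteristic. This gives at once $\fkc = t^{40}V \subseteq I$ and $A/I \cong I/(f)$, and it reduces the Ulrich property of $I$ to the single divisibility $g^2 \in fI$. Because $f\fkc = t^{60}V$, it then suffices to exhibit $\varphi, \psi \in A$ with $g^2 \equiv f^2\varphi + fg\psi \bmod t^{60}V$; one takes for $(\varphi, \psi)$ the polynomials produced in the proof of the $\operatorname{ch}(k) \ne 2$ classification, specialized at $\varepsilon = 2$, when $\operatorname{ch}(k) \ne 2$, and those produced in the proof of Corollary~\ref{2.5}, specialized at $\alpha = 1$, when $\operatorname{ch}(k) = 2$; in either case the required identity is read off from the table $(\#)$, which is already displayed.

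I do not anticipate any genuine obstacle, since the statement is phrased as a corollary precisely because the substance has been established. The only mild nuisance, if one follows the second route, is that the $\operatorname{ch}(k) \ne 2$ formulas for $\varphi, \psi$ carry $2$-power denominators and are meaningless in characteristic $2$ — which is exactly why the case split cannot be avoided — but within each case the verification is entirely routine and already on record.
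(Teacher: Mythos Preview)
Your proposal is correct and your first approach is exactly the one the paper intends: the corollary is stated without proof, as the immediate specialization of Theorem~\ref{main} at $\varepsilon = 1$. Your remark on the characteristic~$2$ reduction to Corollary~\ref{2.5} with $\alpha = 1$ is also accurate, and your alternative route via Lemma~\ref{2.1} is a sound (if redundant) way to make the argument self-contained.
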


\begin{cor}
Suppose that $\operatorname{ch}(k) = 3$ and let 
$f=t^{20} +\varepsilon^2 t^{22} +\gamma t^{27}+\delta t^{33}$ and $g= t^{26} +\varepsilon t^{27} + \tau t^{33}$ with $\varepsilon, \gamma, \delta \in k$ such that $\varepsilon \ne 0$ and $\gamma = 2\tau + \varepsilon^7$. Then $I=(f,g)$ is an Ulrich ideal of $A$. In particular, $(t^{20} + t^{22}, t^{26}+t^{27}+t^{33}) \in\calX_A$.
\end{cor}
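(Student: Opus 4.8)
The plan is to deduce the corollary as the specialization to $\operatorname{ch}(k)=3$ of the classification of $\calY_A$ in the case $\operatorname{ch}(k)\ne 2$ (Theorem \ref{thm1}), which states that for $f=t^{20}+\alpha t^{21}+\beta t^{22}+\gamma t^{27}+\delta t^{33}$ and $g=t^{26}+\varepsilon t^{27}+\tau t^{33}$ the ideal $(f,g)$ is an Ulrich ideal with $(a,b)=(20,26)$ exactly when $\varepsilon\ne\alpha$, $3\varepsilon=2\alpha$, $\beta=\alpha^2-2\varepsilon^2$ and $\gamma=2\tau-\frac{55}{128}\varepsilon^7$. So the entire proof is a short verification that, over $\mathbb{F}_3$, the pair $(f,g)$ written in the statement has this shape and satisfies these four relations.

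First I would observe that in $\operatorname{ch}(k)=3$ the relation $3\varepsilon=2\alpha$ reads $0=2\alpha$, and since $2$ is a unit this forces $\alpha=0$; hence the displayed $f=t^{20}+\varepsilon^2 t^{22}+\gamma t^{27}+\delta t^{33}$ is precisely the $\alpha=0$ member of that family, with $g$ unchanged. Then I would check the three remaining constraints modulo $3$: the condition $\varepsilon\ne\alpha=0$ is exactly the standing hypothesis $\varepsilon\ne 0$; using $-2\equiv 1\pmod 3$ one gets $\alpha^2-2\varepsilon^2=-2\varepsilon^2=\varepsilon^2$, which matches the $t^{22}$-coefficient $\beta=\varepsilon^2$ of $f$; and reducing the fraction, $55\equiv 1$ and $128\equiv 2\pmod 3$, so $\frac{55}{128}\equiv\frac12\equiv 2$ and hence $-\frac{55}{128}\equiv -2\equiv 1\pmod 3$, which turns $2\tau-\frac{55}{128}\varepsilon^7$ into $2\tau+\varepsilon^7=\gamma$, the last hypothesis. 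Thus $(f,g)\in\calY_A\subseteq\calX_A$, giving the first assertion.

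For the ``in particular'' part I would specialize $\varepsilon=\tau=1$ and $\delta=0$: then $\varepsilon^2=1$ and $\gamma=2\cdot 1+1^{7}=3=0$ in $\operatorname{ch}(k)=3$, so $f=t^{20}+t^{22}$ and $g=t^{26}+t^{27}+t^{33}$, whence $(t^{20}+t^{22},\,t^{26}+t^{27}+t^{33})\in\calX_A$. I do not expect any genuine obstacle here: this is purely a reduction-modulo-$3$ exercise inside an already established classification, and the only points needing care are computing $\frac{55}{128}$ correctly in $\mathbb{F}_3$ and recording the innocuous consequence $\alpha=0$ of $3\varepsilon=2\alpha$ in characteristic $3$, which is exactly what collapses the general normal form to the three-parameter shape $f=t^{20}+\varepsilon^2 t^{22}+\gamma t^{27}+\delta t^{33}$, $g=t^{26}+\varepsilon t^{27}+\tau t^{33}$ appearing in the statement.
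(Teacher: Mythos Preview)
Your proposal is correct and matches the intended argument: the paper states this corollary without proof precisely because it is the immediate specialization of the classification of $\calY_A$ (Theorem~\ref{thm1} for $\operatorname{ch}(k)\ne 2$, equivalently Theorem~\ref{main}) to $\operatorname{ch}(k)=3$, and your reductions of $3\varepsilon=2\alpha$, $\beta=\alpha^2-2\varepsilon^2$, and $\gamma=2\tau-\tfrac{55}{128}\varepsilon^7$ modulo~$3$ are all computed correctly, as is the choice $\varepsilon=\tau=1$, $\delta=0$ for the explicit example.
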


%%%%%%%%%%%%%%%%%%%%%%%%%%%%%%%%%%%%%%%%%%%%%%%%%%%%%%%%%%%%%%%%%%%%%%%%%%%%%%%%%%%%%%%%%%%%%%%%%%%%%%%%%%%%%%%%

\end{document}